\documentclass[a4paper,11pt,oneside,reqno]{amsart}
\usepackage{style}
\addbibresource{bib.bib}

\title{Convergence rate of Euler-Maruyama scheme for McKean-Vlasov SDEs with density-dependent drift}
\author[Le]{Anh-Dung Le}
\address{Toulouse School of Economics, 1 Esplanade de l'Université, 31000 Toulouse, France}
\email[Le]{leanhdung1994@gmail.com}
\date{\today}
\thanks{This work was supported by ANR MaSDOL (No. ANR-19-CE23-0017); Air Force Office of Scientific Research, Air Force Material Command, USAF (No. FA9550-19-7026); and ANR Chess (No. ANR-17-EURE-0010).}

\begin{document}

\begin{abstract}
In this paper, we study weak well-posedness of a McKean-Vlasov stochastic differential equations (SDEs) whose drift is density-dependent and whose diffusion is constant. The existence part is due to Hölder stability estimates of the associated Euler-Maruyama scheme. The uniqueness part is due to that of the associated Fokker-Planck equation. We also obtain convergence rate in weighted $L^1$ norm for the Euler-Maruyama scheme.
\end{abstract}

\maketitle

\textbf{Keywords:} McKean-Vlasov SDEs, density-dependent SDEs, Euler-Maruyama scheme

\textbf{2020 MSC:} 60B10, 60H10

\tableofcontents

\section{Introduction} \label{intro}
We fix $p \in [1, \infty)$ and let $\sP_p (\bR^d)$ be the space of Borel probability measures on $\bR^d$ with finite $p$-th moment. We endow $\sP_p (\bR^d)$ with Wasserstein metric $W_p$. We fix $T \in (0, \infty)$ and let $\bT$ be the interval $[0, T]$. We consider a measurable function
\[
b : \bT \times \bR^d \times \bR_+ \times \sP_p (\bR^d) \to \bR^d.
\]

Let $(B_t, t \ge 0)$ be a $d$-dimensional Brownian motion and $\bF \coloneq (\cF_t, t \ge 0)$ be an admissible filtration on a probability space $(\Omega, \mathcal A, \bP)$. We assume that $(\Omega, \cA, \bF, \bP)$ satisfies the usual conditions. In this paper, we consider the SDE
\begin{equation} \label{main_eq1}
\diff X_t = b(t, X_t, \ell_t (X_t), \mu_t) \diff t + \sqrt 2 \diff B_t,
\quad t \in \bT,
\end{equation}
where the marginal distribution of $X_0$ is $\nu$, that of $X_t$ is $\mu_t$, and the marginal density of $X_t$ is $\ell_t$. The term ``McKean-Vlasov'' is due to the presence of $\mu_t$ while the term ``density-dependent'' is due to that of $\ell_t (x)$.

To show the existence of a weak solution, we consider an Euler-Maruyama scheme $X^n \coloneq (X^n_t, t \in \bT)$ constructed as follows. Let $n \in \bN$ and $\eps_n \coloneq T/n$. Let $t_k \coloneq k \eps_n$ for $k \in [\![0, n ]\!] \coloneq \{0, 1, \ldots, n\}$. Let $\tau^n_t \coloneq t_k$ if $t \in [t_k, t_{k+1})$ for some $k \in [\![0, n-1 ]\!]$. For every $t \in \bT$, let
\begin{equation} \label{el_sch}
X^n_t \coloneq X_0 +  \int_0^t b (s, X^n_{\tau^n_s}, \ell^n_{\tau^n_s} (X^n_{\tau^n_s}), \mu^n_{\tau^n_s}) 1_{(\eps_n, T]} (s) \diff s + \sqrt 2 B_t ,
\end{equation}
where $\ell^n_s, \mu^n_s$ are the density and distribution of $X^n_s$ respectively.

We consider the following set of assumptions:
\begin{enumerate}[label=\textbf{(A)}]
\item \label{main_assmpt1}
There exist constants $\alpha \in (0, 1), p \in [1, \infty), C > 0$ such that for all $t \in \bT; x, y \in \bR^d; r, r' \in \bR_+$ and $\varrho, \varrho' \in \sP_p (\bR^d)$:
\begin{enumerate}[label=(A\arabic*)]
\item \label{main_assmpt1:1} $| b (t, x, r, \varrho) | \le C$.
\item \label{main_assmpt1:2} $\nu \in \sP_{p+\alpha} (\bR^d)$ has a density $\ell_\nu \in C^{\alpha}_b (\bR^d)$.
\item \label{main_assmpt1:3} $| b (t, x, r, \varrho) - b (t, x, r', \rho') | \le C \{ |r- r'| + W_p (\varrho, \varrho') \}$.
\end{enumerate}
\end{enumerate}

We require $\nu \in \sP_{p+\alpha} (\bR^d)$ and not just $\nu \in \sP_{p} (\bR^d)$. By Markov's inequality, this gives us a direct control \eqref{main_thm2:ineq1} of the tail of $p$-moment. We gather parameters in \ref{main_assmpt1}:
\[
\Theta_1 \coloneq (d, T, \alpha, p, C).
\]

First, we prove the following estimates about Hölder regularity:

\begin{theorem} \label{main_thm1}
Let \ref{main_assmpt1} hold. There exist constants $c_1>0$ (depending on $\Theta_1$) and $c_2>0$ (depending on $\Theta_1, \nu$) such that for all $0 \le s < t \le T$:
\begin{align}
\sup_n \sup_{t \in \bT} \| \ell^n_t \|_{C^\alpha_b} &\le c_1 \| \ell_\nu \|_{C^\alpha_b}, \label{main_thm1:ineq1} \\
\sup_n \| \ell^n_t - \ell^n_s \|_\infty &\le c_2 (t-s)^{\frac{\alpha}{2}}, \label{main_thm1:ineq2} \\
\sup_n W_p (\mu^n_t, \mu^n_s) &\le c_1 (t-s)^{\frac{1}{2}}, \label{main_thm1:ineq3} \\
\sup_n \int_{\bR^d} (1+|x|^p) |\ell^n_t (x) - \ell^n_s (x)| \diff x &\le c_2 (t-s)^{\frac{\alpha}{2}} s^{-\frac{\alpha}{2}}. \label{main_thm1:ineq4} 
\end{align}

If we assume, in addition, that $\int_{\bR^d} (1+|x|^p) \sqrt{\ell_\nu (x)} \diff x  \le C$, then
\begin{equation} \label{main_thm1:ineq5}
\sup_n \int_{\bR^d} (1+|x|^p) |\ell^n_t (x) - \ell^n_s (x)| \diff x \le c_2 (t-s)^{\frac{\alpha}{4}}.
\end{equation}
\end{theorem}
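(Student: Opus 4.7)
The plan is to derive a Duhamel-type mild formulation for the density $\ell^n_t$ and then bound each of the five quantities by careful Gaussian heat-kernel estimates. Applying Itô's formula to $\phi(X^n_t)$, taking expectations, and conditioning on $\cF_{\tau^n_r}$ in the drift term, one arrives at the weak forward Kolmogorov equation $\partial_t \ell^n_t = \Delta \ell^n_t - \Div G^n_t$, where
\begin{equation*}
G^n_r(y) \coloneq \ell^n_r(y)\, \bE\!\left[\, b\bigl(r, X^n_{\tau^n_r}, \ell^n_{\tau^n_r}(X^n_{\tau^n_r}), \mu^n_{\tau^n_r}\bigr)\,\big|\,X^n_r = y\,\right]
\end{equation*}
satisfies $|G^n_r(y)| \le C \ell^n_r(y)$ by \ref{main_assmpt1:1} (and vanishes for $r \in [0, \eps_n]$). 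Writing $g_\sigma$ for the $d$-dimensional Gaussian density of variance $\sigma$, integrating against the heat semigroup produces
\begin{equation*}
\ell^n_t = \ell_\nu * g_{2t} - \int_0^t G^n_r * \nabla g_{2(t-r)} \diff r,
\end{equation*}
which is the workhorse for all five bounds.

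For \eqref{main_thm1:ineq1} I would combine the two elementary Gaussian estimates $\|f * g_\sigma\|_{C^\alpha_b} \le \|f\|_{C^\alpha_b}$ and $\|h * \nabla g_\sigma\|_{C^\alpha_b} \le C \sigma^{-(1+\alpha)/2}\|h\|_\infty$ (both following by rescaling of $g_1$). Plugging into the Duhamel identity and using $\|G^n_r\|_\infty \le C \|\ell^n_r\|_{C^\alpha_b}$ yields the Volterra inequality
\begin{equation*}
\|\ell^n_t\|_{C^\alpha_b} \le \|\ell_\nu\|_{C^\alpha_b} + C \int_0^t (t-r)^{-(1+\alpha)/2}\, \|\ell^n_r\|_{C^\alpha_b}\, \diff r.
\end{equation*}
Because $(1+\alpha)/2 < 1$, the iterated kernels converge and a Gronwall--Mittag-Leffler argument closes the inequality to give \eqref{main_thm1:ineq1} with a constant depending only on $\Theta_1$. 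I expect this step to be the main obstacle: the Volterra kernel is only borderline integrable, and the argument must avoid ever invoking a norm strictly stronger than $C^\alpha_b$ when closing the loop.

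For \eqref{main_thm1:ineq2} I would split $\ell^n_t - \ell^n_s$ via Duhamel into $\ell_\nu * (g_{2t} - g_{2s}) - \int_s^t G^n_r * \nabla g_{2(t-r)} \diff r - \int_0^s G^n_r * [\nabla g_{2(t-r)} - \nabla g_{2(s-r)}] \diff r$ and estimate each piece in sup-norm: the free part by the semigroup estimate $\|f * (g_{2t} - g_{2s})\|_\infty \le C \|f\|_{C^\alpha_b}(t-s)^{\alpha/2}$; the first drift by $O((t-s)^{1/2})$ from $\|\nabla g_{2u}\|_{L^1} \le C u^{-1/2}$ and the uniform $L^\infty$ bound from Step 2; and the second via the interpolated Gaussian estimate $\|\nabla g_{2u} - \nabla g_{2v}\|_{L^1} \le C (u-v)^{\alpha/2}v^{-(1+\alpha)/2}$, obtained by writing $\nabla g_{2u} - \nabla g_{2v} = (g_{2(u-v)} - \delta) * \nabla g_{2v}$ and using $\alpha$-th order $L^1$ regularity of $\nabla g_{2v}$. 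Inequality \eqref{main_thm1:ineq3} is elementary: $X^n_t - X^n_s = \int_s^t b^n_r\diff r + \sqrt 2(B_t - B_s)$ with \ref{main_assmpt1:1} gives $W_p(\mu^n_t, \mu^n_s) \le \|X^n_t - X^n_s\|_{L^p} \le C(t-s)^{1/2}$.

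The weighted-$L^1$ bounds \eqref{main_thm1:ineq4}--\eqref{main_thm1:ineq5} follow the same Duhamel decomposition, now estimated against the polynomial weight $w(x) = 1 + |x|^p$ via a weighted Young-type inequality for convolutions. This requires a preliminary uniform moment bound $\sup_{n,r}\int w\, \ell^n_r\diff x < \infty$, which comes from boundedness of $b$ together with $\nu \in \sP_{p+\alpha}$; the two drift contributions then produce terms of order $(t-s)^{\alpha/2}$. The free part $\ell_\nu * (g_{2t} - g_{2s})$ is rewritten as $\int_s^t \Delta(g_{2u}*\ell_\nu)\diff u$ and, via the zero-mean identity $\int \Delta g_{2u} \diff y = 0$, reduced to controlling $\int w(x)|\ell_\nu(x-y) - \ell_\nu(x)|\diff x$. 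Under the bare assumption \ref{main_assmpt1} a tail-ball decomposition combining the pointwise Hölder bound on $\ell_\nu$ and the $\sP_{p+\alpha}$ moment tail produces the singular prefactor $s^{-\alpha/2}$ of \eqref{main_thm1:ineq4}. Under the additional hypothesis $\int w\sqrt{\ell_\nu}\diff x \le C$ one instead applies the Cauchy--Schwarz interpolation
\begin{equation*}
|\ell_\nu(x-y) - \ell_\nu(x)| \le \|\ell_\nu\|_{C^\alpha_b}^{1/2}|y|^{\alpha/2}\sqrt{\ell_\nu(x-y)+\ell_\nu(x)},
\end{equation*}
which absorbs the singular $s^{-\alpha/2}$ into a finite moment at the cost of halving the Hölder exponent to $\alpha/4$, yielding \eqref{main_thm1:ineq5}.
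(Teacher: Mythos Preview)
Your Duhamel-plus-heat-kernel strategy matches the paper's, and your treatments of \eqref{main_thm1:ineq2}, \eqref{main_thm1:ineq3} and \eqref{main_thm1:ineq5} are essentially identical to it (the paper phrases the Cauchy--Schwarz step for \eqref{main_thm1:ineq5} via an auxiliary Gaussian random variable, but the content is the same interpolation you wrote). The genuine methodological difference is in \eqref{main_thm1:ineq1}: the paper does \emph{not} set up and close a Volterra inequality in the full $C^\alpha_b$ norm. Instead it runs a two-stage argument: first it isolates the pure sup-norm bound $\sup_t\|\ell^n_t\|_\infty \le c\|\ell_\nu\|_\infty$ by a short-time bootstrap (choose $m$ so that on $[0,t_m]$ the Duhamel drift contribution is at most $\|\ell_\nu\|_\infty$, giving $\|\ell^n_{t_k}\|_\infty\le 2\|\ell_\nu\|_\infty$ by induction on $k\le m$; then iterate finitely many times to cover $[0,T]$), and only afterwards estimates the H\"older seminorm by plugging this already-established sup bound into the Duhamel integral. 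Your single-pass singular-Gronwall route is more streamlined; the paper's is more elementary (no Gronwall at all) and has the minor bonus that $\|\ell^n_t\|_\infty$ is controlled by $\|\ell_\nu\|_\infty$ alone rather than by the full $\|\ell_\nu\|_{C^\alpha_b}$. A smaller difference concerns the free part of \eqref{main_thm1:ineq4}: the paper does not rewrite $\ell_\nu*(p_t-p_s)$ as $\int_s^t\Delta(\cdot)\,\diff u$ and do a tail--ball split, but simply applies the pointwise time-H\"older heat-kernel estimate $|p_t(z)-p_s(z)|\le c(t-s)^{\alpha/2}\{t^{-\alpha/2}p_{2t}(z)+s^{-\alpha/2}p_{2s}(z)\}$, from which the singular factor $s^{-\alpha/2}$ drops out in one line after integrating against $(1+|x|^p)\ell_\nu$; this is shorter than your route, though yours also works.
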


Estimate \eqref{main_thm1:ineq4} will be used in the proof of \zcref{main_thm3}, but it explodes as $s \downarrow 0$. As in \cite{song_convergence_2024}, we use time cutoff $1_{(\eps_n, T]}$ in \eqref{el_sch} to tackle this issue. As a direct application of \zcref{main_thm1}, we obtain

\begin{theorem} \label{main_thm2}
Let \ref{main_assmpt1} hold. Then \eqref{main_eq1} has a weak solution whose marginal density satisfies the estimates in \zcref{main_thm1}.
\end{theorem}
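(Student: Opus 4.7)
My plan is to use \zcref{main_thm1} to extract a compact subsequence from the Euler--Maruyama densities and laws, derive a Fokker--Planck equation for the limit, and finally realise this density as the marginal of a weak solution via Girsanov. By the uniform $C^\alpha$-bound \eqref{main_thm1:ineq1} together with the Hölder-in-time bound \eqref{main_thm1:ineq2}, the family $(\ell^n)_n$ is equicontinuous and uniformly bounded on $\bT \times \bR^d$; Arzelà--Ascoli combined with a diagonal extraction yields $\ell^n \to \ell$ locally uniformly on $\bT \times \bR^d$, with $\ell$ inheriting the same Hölder regularity. Similarly, \eqref{main_thm1:ineq3} plus uniform $p$-moment bounds (a consequence of \ref{main_assmpt1:1}, \ref{main_assmpt1:2}, and the explicit form \eqref{el_sch}) make $t \mapsto \mu^n_t$ $W_p$-equicontinuous and pointwise tight; a further extraction gives $\mu^n_t \to \mu_t$ in $W_p$, uniformly in $t$. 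Testing against $\varphi \in C_c(\bR^d)$ and using the local uniform convergence of $\ell^n$ identifies $\diff \mu_t = \ell_t \diff x$.

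Next I derive the limiting Fokker--Planck equation. For $\varphi \in C^2_c(\bR^d)$, Itô's formula on $\varphi(X^n_t)$ followed by expectation gives
\begin{equation*}
\int \varphi \ell^n_t \diff x - \int \varphi \ell_\nu \diff x = \int_{\eps_n}^t \bE\bigl[\nabla \varphi(X^n_s) \cdot b(s, X^n_{\tau^n_s}, \ell^n_{\tau^n_s}(X^n_{\tau^n_s}), \mu^n_{\tau^n_s})\bigr] \diff s + \int_0^t \int \Delta \varphi \, \ell^n_s \diff x \diff s .
\end{equation*}
Using \eqref{main_thm1:ineq1}--\eqref{main_thm1:ineq3}, \ref{main_assmpt1:3}, and the elementary $\bE|X^n_s - X^n_{\tau^n_s}| = O(\eps_n^{1/2})$, I replace every $\tau^n_s$-argument by the corresponding $s$-version at the cost of an error vanishing with $n$. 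The drift integral then reduces to $\int_{\eps_n}^t \int \nabla \varphi(x) \cdot b(s, x, \ell^n_s(x), \mu^n_s) \, \ell^n_s(x) \diff x \diff s$, whose integrand converges pointwise (by \ref{main_assmpt1:3} and the convergences of the previous paragraph) and is uniformly bounded on $\supp \varphi$. Bounded convergence yields, for every $\varphi \in C^2_c(\bR^d)$ and $t \in \bT$,
\begin{equation*}
\int \varphi \ell_t \diff x - \int \varphi \ell_\nu \diff x = \int_0^t \int \bigl[\nabla \varphi(x) \cdot b(s, x, \ell_s(x), \mu_s) + \Delta \varphi(x)\bigr] \ell_s(x) \diff x \diff s ,
\end{equation*}
i.e.\ $\ell$ weakly solves the linear Fokker--Planck equation with bounded Borel drift $B(t,x) := b(t, x, \ell_t(x), \mu_t)$.

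Since $B$ is bounded, Girsanov's theorem provides a weak solution $Y$ to $\diff Y_t = B(t, Y_t) \diff t + \sqrt 2 \diff W_t$ with $Y_0 \sim \nu$; its marginal density $q_t$ satisfies the same linear Fokker--Planck equation with initial datum $\ell_\nu$, and standard uniqueness for such an equation with bounded drift (obtained e.g.\ by duality with the backward heat equation) forces $q_t = \ell_t$. Hence the law of $Y_t$ equals $\mu_t$ and $Y$ is a weak solution of \eqref{main_eq1} whose density inherits the estimates of \zcref{main_thm1} by the convergences of the first paragraph. The main difficulty is the drift-passage-to-limit in the second paragraph, since \ref{main_assmpt1} does not impose any continuity of $b$ in $(t, x)$: the key observation is that once $\tau^n_s$ has been replaced by $s$, the $x$-variable is integrated against a smooth test function, so one only needs convergence in the $(r, \varrho)$-slots of $b$, which is exactly what the Lipschitz assumption \ref{main_assmpt1:3} combined with the compactness from the first paragraph provides.
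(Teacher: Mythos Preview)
Your argument tracks the paper's proof closely through the compactness extraction and the derivation of the limiting Fokker--Planck equation; the one substantive divergence is the final step. The paper, having obtained the distributional Fokker--Planck equation for $\ell$, invokes the superposition principle (as in \cite{barbu2020nonlinear}) to directly produce a weak solution of \eqref{main_eq1} with marginal flow $(\mu_t)$. Your alternative---construct $Y$ via Girsanov for the frozen linear drift $B(t,x)=b(t,x,\ell_t(x),\mu_t)$ and then identify its marginals with $\ell_t$ by uniqueness for the linear Fokker--Planck equation---is equally valid here because $B$ is bounded (so Girsanov gives weak well-posedness of the linear SDE, which in turn yields Fokker--Planck uniqueness). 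The superposition route is slightly more economical since it bypasses the separate uniqueness step; your route trades that machinery for Girsanov plus a linear uniqueness argument, which is arguably more elementary.

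One imprecision worth flagging in the passage to the limit: you write ``replace every $\tau^n_s$-argument by the corresponding $s$-version'', but the replacement $X^n_{\tau^n_s}\to X^n_s$ in the \emph{spatial} slot of $b$ is not justified, since \ref{main_assmpt1} imposes no regularity of $b$ in $x$. The correct manoeuvre (which the paper carries out via the joint-density formula \eqref{sup_bound1:eq1}, and which your final paragraph gestures toward) is the opposite direction: transfer the discrepancy onto the smooth factor by replacing $\nabla\varphi(X^n_s)$ with $\nabla\varphi(X^n_{\tau^n_s})$ at cost $\|\nabla^2\varphi\|_\infty\,\bE|X^n_s-X^n_{\tau^n_s}|=O(\eps_n^{1/2})$, and then write the expectation as $\int_{\bR^d} \langle\nabla\varphi(x), b(s,x,\ell^n_{\tau^n_s}(x),\mu^n_{\tau^n_s})\rangle\,\ell^n_{\tau^n_s}(x)\diff x$, where only the $(r,\varrho)$-slots and the outer density remain to be passed to the limit via \ref{main_assmpt1:3}, \eqref{main_thm1:ineq2} and \eqref{main_thm1:ineq3}. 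With this correction your argument goes through.
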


The main result of the paper is the following rate of convergence:

\begin{theorem} \label{main_thm3}
Let \ref{main_assmpt1} hold and $p=1$. There exists a constant $c>0$ (depending on $\Theta_1, \nu$) such that for any $n \in \bN$:
\[
\sup_{t \in \bT} \int_{\bR^d} (1+|x|)|\ell^n_t (x) - \ell_t(x)| \diff x \le c n^{-\frac{\alpha}{2}}.
\]
\end{theorem}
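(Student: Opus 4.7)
The plan is to work at the level of Fokker--Planck equations and close a singular Gronwall estimate in the weighted norm $\|f\|_{1,1} := \int_{\bR^d}(1+|x|)|f(x)|\diff x$. Set $u_t := \ell^n_t - \ell_t$. By Itô's formula applied to smooth test functions, the marginal density of the scheme satisfies the linear Fokker--Planck equation
\[
\partial_t \ell^n_t = \Delta \ell^n_t - \Div(\bar b^n_t \ell^n_t), \qquad \ell^n_0 = \ell_\nu,
\]
with the projected drift $\bar b^n_t(x) := \bE[b(t, X^n_{\tau^n_t}, \ell^n_{\tau^n_t}(X^n_{\tau^n_t}), \mu^n_{\tau^n_t}) \mid X^n_t = x]\,1_{(\eps_n,T]}(t)$, while $\ell_t$ (supplied by \zcref{main_thm2}) solves the analogous equation with drift $b_t(x) := b(t, x, \ell_t(x), \mu_t)$. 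Subtracting and applying Duhamel against the heat semigroup $P_r = e^{r\Delta}$ gives the mild formulation
\[
u_t = -\int_0^t \nabla \cdot P_{t-s}\bigl[b_s u_s + (\bar b^n_s - b_s)\ell^n_s\bigr]\diff s.
\]

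From the standard Gaussian estimates $\|\nabla p_r\|_{L^1}\le Cr^{-1/2}$ and $\||\cdot|\nabla p_r\|_{L^1}\le C$, combined with $1+|x|\le(1+|y|)(1+|x-y|)$, I deduce the weighted heat-kernel bound $\|\nabla P_r f\|_{1,1}\le Cr^{-1/2}\|f\|_{1,1}$ for $r\in(0,T]$. Using $|b|\le C$ from \ref{main_assmpt1:1} to dominate $\|b_s u_s\|_{1,1}\le C\|u_s\|_{1,1}$, this yields
\[
\|u_t\|_{1,1}\le C\int_0^t(t-s)^{-1/2}\bigl(\|u_s\|_{1,1} + \|(\bar b^n_s - b_s)\ell^n_s\|_{1,1}\bigr)\diff s,
\]
to which the contribution of $s\in[0,\eps_n]$ (where $\bar b^n_s\equiv 0$) adds a harmless term of order $\eps_n^{1/2}\le T^{(1-\alpha)/2}n^{-\alpha/2}$.

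The heart of the argument is the key bound $\|(\bar b^n_s - b_s)\ell^n_s\|_{1,1} \le C(n^{-\alpha/2} + \|u_s\|_{1,1})$ for $s>\eps_n$. I would use the tower property to rewrite it as
\[
\|(\bar b^n_s - b_s)\ell^n_s\|_{1,1} \le \bE\Bigl[(1+|X^n_s|)\bigl|b(s,X^n_{\tau^n_s},\ell^n_{\tau^n_s}(X^n_{\tau^n_s}),\mu^n_{\tau^n_s}) - b(s,X^n_s,\ell_s(X^n_s),\mu_s)\bigr|\Bigr],
\]
and telescope the difference through intermediate terms so that every successive pair differs in only one slot of $b$ and \ref{main_assmpt1:3} applies directly. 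The relevant ingredients from \zcref{main_thm1} are the time regularity $\|\ell^n_{\tau^n_s}-\ell^n_s\|_\infty\le c_2\eps_n^{\alpha/2}$ from \eqref{main_thm1:ineq2}, the time regularity $W_1(\mu^n_{\tau^n_s},\mu^n_s)\le c_1\eps_n^{1/2}$ from \eqref{main_thm1:ineq3}, and the uniform spatial Hölder bound $\|\ell^n_s\|_{C^\alpha_b}\le c_1\|\ell_\nu\|_{C^\alpha_b}$ from \eqref{main_thm1:ineq1}, which converts a spatial shift in the density slot into an $|X^n_{\tau^n_s}-X^n_s|^\alpha$ factor. The elementary estimate $\bE[(1+|X^n_s|)|X^n_s-X^n_{\tau^n_s}|^\alpha]\le C\eps_n^{\alpha/2}$ (from \ref{main_assmpt1:1}, moment bounds on $X^n_s$, and Cauchy--Schwarz with Gaussian moments of the Brownian increment) then collapses the time-discretization error to the desired rate, while residual terms involving $u_s$ are absorbed using $\|\ell^n_s\|_\infty\le C$ and the Kantorovich--Rubinstein duality $W_1(\mu^n_s,\mu_s)\le\|u_s\|_{1,1}$.

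Inserting the key bound and applying a standard singular Gronwall inequality (iterating the fractional kernel $(t-s)^{-1/2}$ once to produce a bounded kernel) yields $\sup_{t\in\bT}\|u_t\|_{1,1}\le c n^{-\alpha/2}$, as claimed. The main obstacle I anticipate is precisely the key bound: since \ref{main_assmpt1:3} gives no continuity of $b$ in the spatial slot $x$, the telescoping decomposition must be arranged so that every invocation of \ref{main_assmpt1:3} keeps the $x$-argument of $b$ frozen, and any $x$-shift is routed through the $(r,\varrho)$-slot using the Hölder regularity of $\ell^n_s$ from \eqref{main_thm1:ineq1}. Organizing this decomposition without leaving behind any residual direct $x$-roughness of $b$ is where the delicate work lies.
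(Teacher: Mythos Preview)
Your proposal has a genuine gap precisely where you flag the ``delicate work'': the key bound $\|(\bar b^n_s-b_s)\ell^n_s\|_{1,1}\le C(n^{-\alpha/2}+\|u_s\|_{1,1})$ cannot be obtained under \ref{main_assmpt1}. After your tower/Jensen step you must control $\bE[(1+|X^n_s|)\,|b(s,X^n_{\tau^n_s},r,\varrho)-b(s,X^n_s,r,\varrho)|]$ for some fixed $(r,\varrho)$, and no amount of telescoping avoids this residual, since at least one hop must move the spatial argument from $X^n_{\tau^n_s}$ to $X^n_s$ with the other slots frozen. Assumption \ref{main_assmpt1:3} gives Lipschitz dependence only in $(r,\varrho)$; there is no modulus of continuity of $b$ in $x$ whatsoever (only measurability and boundedness), so this term can be of order one. ``Routing the $x$-shift through the $(r,\varrho)$-slot'' handles only $\ell^n_{\tau^n_s}(X^n_{\tau^n_s})$ versus $\ell^n_s(X^n_s)$; it does nothing for the first argument of $b$.

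The paper circumvents this by never comparing $b$ at two different spatial points. Working with the explicit Duhamel representation \eqref{duhamel:eq1}, it first shifts the \emph{kernel} argument, bounding $\bE[\langle b^n(s,X^n_{\tau^n_s}),\nabla p_{t-s}(X^n_s-z)-\nabla p_{t-s}(X^n_{\tau^n_s}-z)\rangle]$ via \ref{main_assmpt1:1} and \eqref{gaussian_heat_kernel:ineq4} (this is $I^n_1$). After that shift, both $b^n$ and $\nabla p_{t-s}$ are evaluated at $X^n_{\tau^n_s}$, so the expectation equals $\int\langle b^n(s,x),\nabla p_{t-s}(x-z)\rangle\ell^n_{\tau^n_s}(x)\diff x$; comparing with the same integrand against $\ell^n_s$ puts the entire discrepancy on $\ell^n_{\tau^n_s}-\ell^n_s$ in weighted $L^1$ (this is $I^n_2$), where the $x$-argument of $b$ is the dummy integration variable and never changes. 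Controlling $I^n_2$ then requires \eqref{main_thm1:ineq4}, whose $s^{-\alpha/2}$ blow-up is exactly what forces the time cutoff $1_{(\eps_n,T]}$ in the scheme. In your framework this amounts to \emph{not} passing through the pointwise bound on $\|(\bar b^n_s-b_s)\ell^n_s\|_{1,1}$, but instead exploiting cancellation at the level of $\int\nabla p_{t-s}(\cdot-z)\cdot(\bar b^n_s\ell^n_s)=\bE[\nabla p_{t-s}(X^n_s-z)\cdot b^n(s,X^n_{\tau^n_s})]$ before applying any Jensen-type inequality.
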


Next we review works related to \eqref{main_eq1} which is a special case of the SDE
\begin{equation} \label{main_eq3}
\diff X_t = b(t, X_t, \ell_t (X_t), \mu_t) \diff t + \sigma (t, X_t, \ell_t (X_t), \mu_t) \diff B_t,
\quad t \in \bT,
\end{equation}
for some measurable functions
\begin{align}
b &: \bT \times \bR^d \times \bR_+ \times \sP_p (\bR^d) \to \bR^d , \\
\sigma &: \bT \times \bR^d \times \bR_+ \times \sP_p (\bR^d) \to \bR^d \otimes \bR^d .
\end{align}

Equation \eqref{main_eq3} is usually studied in two separate cases:

\begin{enumerate}
\item The first case is where $b(t, x, r, \mu) = b(t, x, \mu)$ and $\sigma (t, x, r, \mu) = \sigma (t, x, \mu)$, i.e., there is only distribution-dependence. Then \eqref{main_eq3} is commonly called (classical) McKean-Vlasov SDEs and appears as the limit of various interacting particle systems in biology (see e.g. \cite{naldi2010mathematical,muntean2014collective,degond2018mathematical}), mean-filed games (see e.g. \cite{carmona2018probabilistic,cardaliaguet2019master}), and data science (see e.g. \cite{mei2018mean,sirignano2020mean,rotskoff2022trainability}). This form of \eqref{main_eq3} has been extensively studied in various aspects, including well-posedness (see e.g. \cite{mishura2020existence,huang2019distribution,rockner_well-posedness_2021}), derivative formulas (see e.g. \cite{huang_derivative_2021,wang_derivative_2023}), long-time behavior (see e.g. \cite{liu_existence_2022,ren_exponential_2021,ren_singular_2023a}), and propagation of chaos (see e.g. \cite{lacker_hierarchies_2023,sznitman1991topics,erny_well-posedness_2022}).
\item The second case is where $b(t, x, r, \mu) = b(t, x, r)$ and $\sigma (t, x, r, \mu) = \sigma (t, x, r)$, i.e., there is only density-dependence. Examples of this form are Burgers' equations \cite{calderoni_propagation_1983,gutkin1983propagation,osada1985propagation,sznitman_propagation_1986} and (generalized) porous media equations \cite{inoue1991derivation,benachour1996processus,blanchard_probabilistic_2010,barbu_probabilistic_2011,belaribi_uniqueness_2012,belaribi_probabilistic_2013}. Recent works on this form include \cite{barbu2018probabilistic,cohen_wellposedness_2019,barbu2020nonlinear,barbu2021solutions,barbu2021uniqueness,barbu_evolution_2023,barbu_uniqueness_2023}. If $\sigma$ is density-dependent, then the map $r \mapsto r(\sigma \sigma^\top) (t, x, r)$ is usually assumed to satisfy some monotonic condition. For Euler-Maruyama scheme in the special case of constant-diffusion, we refer to \cite{hao2021euler,hao_phd_2023,wu_well-posedness_2023,song_convergence_2024}.
\end{enumerate}

There appears to be a shortage of results about SDEs that contain both distribution- and density-dependence. In this direction, we have found only four papers \cite{wang2023singular,hao_strong_2024,zhang_second_2022,le2024wellposedness}. This scarcity is the motivation for our paper. To our best knowledge, the current paper is the first to study Euler-Maruyama scheme of an SDE that contains both distribution- and density-dependent. While \cite{hao2021euler,hao_phd_2023,wu_well-posedness_2023,song_convergence_2024} obtained convergence rate of marginal density in $L^1$ norm, we obtain that in (stronger) weighted $L^1$ norm where the weight is $x \mapsto 1 + |x|$.

The paper is organized as follows. In \zcref{preliminary}, we recall definition of Wasserstein space and heat kernel estimates; prove some crude estimates about the scheme; and establish Duhamel representation of marginal density. We prove our results in \zcref{proof:main_thm1}, \zcref{proof:main_thm2}, and \zcref{proof:main_thm3}, respectively.

We recall two notions of a solution of \eqref{main_eq1}:

\begin{definition}~
\begin{enumerate}
\item A \textit{strong solution} to \eqref{main_eq1} is a continuous $\bR^d$-valued $\bF$-adapted process $(X_t, t \in \bT)$ on $(\Omega, \cA, \bP)$  such that for each $t \in \bT$: the distribution of $X_t$ is $\mu_t \in \sP_p (\bR^d)$ and admits a density $\ell_t$; and
\begin{equation} \label{sol_def1}
X_t = X_0 + \int_0^t b(s, X_s, \ell_s (X_s), \mu_s) \diff s + B_t
\qtext{$\bP$-a.s.}
\end{equation}

\item A \textit{weak solution} to \eqref{main_eq1} is a continuous $\bR^d$-valued process $(X_t, t \in \bT)$ on some probability space $(\Omega, \cA, \bP)$ where there exist some $d$-dimensional Brownian motion $(B_t, t \ge 0)$ and some admissible filtration $\bF \coloneq (\cF_t, t \ge 0)$ such that $(X_t, t \in \bT)$ is $\bF$-adapted, its marginal distribution admits a density, and \eqref{sol_def1} holds.

\item Equation \eqref{main_eq1} has \textit{strong uniqueness} if any two strong solutions on a given probability space $(\Omega, \cA, \bP)$ for a given Brownian motion and a given admissible filtration coincide $\bP$-a.s. on the path space $C(\bT; \bR^d)$. Equation \eqref{main_eq1} has \textit{weak uniqueness} if any weak solution induces the same distribution on $C(\bT; \bR^d)$.
\end{enumerate}
\end{definition}

Throughout the paper, we use the following conventions:
\begin{enumerate}
\item We denote by $\nabla, \nabla^2, \Delta$ the gradient, the Hessian and the Laplacian with respect to (w.r.t) the spatial variable. We denote by $\partial_t$ (or $\partial_s$) the derivative w.r.t time. For $\alpha \in (0, 1]$ and a function $f : \bR^d \to \bR$, let $[f]_\alpha \coloneq \sup_{x \neq y} \frac{|f (x)-f (y)|}{|x-y|^\alpha}$ be the $\alpha$-Hölder constant of $f$. For $\alpha \in (0, 1)$, let $C^{\alpha}_b (\bR^d)$ be the Hölder space of functions $f : \bR^d \to \bR$ with the norm $\| f \|_{C^{\alpha}_b} \coloneq \|f\|_{\infty} + [f]_\alpha$.

\item Let $L^0 (\bR^d)$ be the space of real-valued measurable functions on $\bR^d$. Let $L^0_+ (\bR^d)$ be the subset of $L^0 (\bR^d)$ that consists of non-negative functions. Let $L^0_b (\bR^d)$ be the subset of $L^0 (\bR^d)$ that consists of bounded functions. We denote by $I_d$ the identity matrix in $\bR^d \otimes \bR^d$. We denote by $\lceil r \rceil$ the smallest integer greater than or equal to  $r \in \bR$.

\item For brevity, we write $\infty$ for $+\infty$. Let $\bN$ be the set of strictly positive natural numbers. Let $\bR_+ \coloneq \{x \in \bR : x \ge 0\}$. For natural numbers $m, n$ with $m \le n$, we denote $[\![m, n ]\!] \coloneq \{m, m+1, \ldots, n \}$. We denote by $\cB (\bR^d)$ the Borel $\sigma$-algebra on $\bR^d$. Let $\sP (\bR^d)$ be the space of Borel probability measures on $\bR^d$. We denote by $M_p (\varrho)$ the $p$-th moment of $\varrho \in \sP (\bR^d)$, i.e.,
\[
M_p (\varrho) \coloneq \int_{\bR^d} |x|^p \diff \varrho (x).
\]
\end{enumerate}

\section{Preliminaries} \label{preliminary}

\subsection{Wasserstein space $(\sP_p (\bR^d), W_p)$}

Let $\mu, \nu \in \sP (\bR^d)$. The set of \textit{transport plans} (or \textit{couplings}) between $\mu$ and $\nu$ is defined by
\[
\Gamma (\mu, \nu) \coloneq \{ \varrho \in \sP ({\bR}^d \times {\bR}^d ) : \mu = \pi^1_\sharp \varrho \text{ and } \nu = \pi^2_\sharp \varrho \} .
\]

Above, $\pi^i$ is the projection of $\bR^d \times \bR^d$ onto its $i$-th coordinate, and $\pi^i_\sharp \varrho \in \sP (\bR^d)$ is the push-forward measure of $\varrho$ through $\pi^i$ , i.e., $\{ \pi^i_\sharp \varrho \} (A) = \varrho (\{ \pi^i \}^{-1} (A))$ for all $A \in \cB (\bR^d)$. For $\mu, \nu \in \sP_p (\bR^d)$, we define
\[
W_p (\mu, \nu) \coloneq \inf_{\varrho \in \Gamma (\mu, \nu)} \left \{ \int_{\bR^d} |x-y|^p \diff \varrho (x, y) \right \}^{1/p}.
\]

By \cite[Theorem 6.18]{villani2009optimal}, $(\sP_p (\bR^d), W_p)$ is a Polish space. For more information about Wasserstein space, we refer to \cite{villani2003topics,villani2009optimal,Ambrosio2013,santambrogio2015optimal,figalli_invitation_2021,ambrosio2021lectures,maggi2023optimal}.

\subsection{The scheme is well-defined}

First, we recall the freezing lemma. For the sake of completeness, we include its proof in the Appendix.

\begin{lemma} \label{freezing}
Let $(\Omega, \cA, \bP)$ be a probability space and $\cD, \cG$ independent sub-$\sigma$-algebras of $\cA$. Let $(E, \cE)$ be a measurable space and $Y: \Omega \to E$ measurable w.r.t $\cD$. Assume that $\varphi:E \times \Omega \to \bR^d$ is measurable w.r.t $\cE \otimes \cG$ and that $\varphi (Y, \cdot)$ is integrable.
\begin{enumerate}
\item  Let $N \coloneq \{x \in E : \varphi(x, \cdot) \text{ not integrable}\}$. Let $\mu$ be the distribution of $Y$ on $(E, \cE)$. Then $N \in \cE$ and $\mu(N)=0$.
\item We define $\Phi : E \to \bR^d$ by $\Phi (y) \coloneq 0$ for $y \in N$ and $\Phi (y) \coloneq \bE [\varphi(y, \cdot)]$ for $y \in E \setminus N$. Then $\Phi$ is measurable and $\bE [\varphi(Y, \cdot) | \cD] = \Phi (Y)$.
\end{enumerate}
\end{lemma}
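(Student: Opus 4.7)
The plan is to apply the standard machine (monotone class argument), proving the identity $\bE[\varphi(Y,\cdot) \mid \cD] = \Phi(Y)$ first for indicators, then simple functions, then nonnegative measurable functions, and finally integrable ones, with claim (1) extracted from the nonnegative case applied to $|\varphi|$. The base case is $\varphi = \mathbf 1_{A \times B}$ with $A \in \cE$, $B \in \cG$: independence of $\cD$ and $\cG$ gives
\[
\bE[\mathbf 1_A(Y) \mathbf 1_B \mid \cD] = \mathbf 1_A(Y)\, \bP(B) = \Phi(Y),
\qtext{where} \Phi(y) = \mathbf 1_A(y)\, \bP(B),
\]
which is clearly $\cE$-measurable. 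Linearity of conditional expectation extends this to $\cE \otimes \cG$-measurable simple functions.

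For nonnegative $\cE \otimes \cG$-measurable $\varphi$, I would choose simple $\varphi_n \uparrow \varphi$ and set $\widetilde\Phi_\varphi(y) \coloneq \bE[\varphi(y, \cdot)] \in [0, \infty]$. Each $\widetilde\Phi_{\varphi_n}$ is $\cE$-measurable by the simple-function case, and the (Lebesgue) monotone convergence theorem makes $\widetilde\Phi_\varphi$ the pointwise limit, hence $\cE$-measurable. The conditional monotone convergence theorem applied to $(\varphi_n(Y, \cdot))$, combined with $\bE[\varphi_n(Y, \cdot) \mid \cD] = \widetilde\Phi_{\varphi_n}(Y)$, then yields $\bE[\varphi(Y, \cdot) \mid \cD] = \widetilde\Phi_\varphi(Y)$ almost surely.

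Claim (1) falls out by applying the nonnegative case to $|\varphi|$: the set $N = \{y : \widetilde\Phi_{|\varphi|}(y) = \infty\}$ lies in $\cE$ by measurability of $\widetilde\Phi_{|\varphi|}$, and taking expectations in $\bE[|\varphi(Y, \cdot)| \mid \cD] = \widetilde\Phi_{|\varphi|}(Y)$ gives
\[
\int_E \widetilde\Phi_{|\varphi|} \diff\mu = \bE[|\varphi(Y, \cdot)|] < \infty,
\]
which forces $\mu(N) = 0$. For integrable $\varphi$, I would decompose $\varphi = \varphi^+ - \varphi^-$ and apply the nonnegative identity to each piece on $E \setminus N$, where both $\widetilde\Phi_{\varphi^\pm}$ are finite and $\Phi(y) = \widetilde\Phi_{\varphi^+}(y) - \widetilde\Phi_{\varphi^-}(y)$. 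Since $\{Y \in N\}$ is $\bP$-null, the convention $\Phi \equiv 0$ on $N$ does not affect $\Phi(Y)$, and we conclude $\bE[\varphi(Y, \cdot) \mid \cD] = \Phi(Y)$ almost surely.

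There is no serious obstacle; the only point requiring care is sequencing. One must establish the identity for nonnegative $\varphi$ first (where $\widetilde\Phi_\varphi$ is automatically well-defined in $[0, \infty]$ and no integrability is needed), so that $\mu$-nullity of $N$ is a \emph{consequence} of the nonnegative identity applied to $|\varphi|$ rather than a prerequisite. After that, the integrable case is a routine decomposition.
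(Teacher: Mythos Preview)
Your proof is correct and proceeds by the standard machine, but it differs in strategy from the paper's argument. The paper does not build up from indicators; instead it identifies the joint law of $(Y,\id)$ on $(E\times\Omega,\cE\otimes\cG)$ as the product $\mu\otimes\bP_\cG$ (this is where independence enters), so that Tonelli immediately gives $\bE[|\varphi(Y,\cdot)|]=\int_E \bE[|\varphi(y,\cdot)|]\diff\mu(y)$ and hence part~(1). For part~(2), rather than approximating $\varphi$, the paper verifies the defining property of conditional expectation directly: for $D\in\cD$ with $\bP[D]>0$ it introduces the tilted measure $\widetilde\bP[\,\cdot\,]=\bP[\,\cdot\cap D]/\bP[D]$, checks that the product structure $\widetilde\nu=\widetilde\mu\otimes\widetilde\bP_\cG$ persists under $\widetilde\bP$, and concludes $\widetilde\bE[\varphi(Y,\cdot)]=\widetilde\bE[\Phi(Y)]$, which unwinds to $\bE[\varphi(Y,\cdot)1_D]=\bE[\Phi(Y)1_D]$. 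Your approach is more elementary and arguably more transparent, since the conditional MCT does the heavy lifting; the paper's route avoids any limiting procedure on $\varphi$ but trades this for a change-of-measure trick. One small remark on your write-up: the passage ``linearity extends this to $\cE\otimes\cG$-measurable simple functions'' silently uses a $\pi$--$\lambda$ step to go from rectangles $A\times B$ to indicators of arbitrary sets in $\cE\otimes\cG$; you flag ``monotone class argument'' at the outset, so this is presumably intended, but it would be worth making explicit.
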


We define $b^n : \bT \times \bR^d \to \bR^d$ by
\begin{equation} \label{drift}
b^n (t, x) \coloneq b (t, x, \ell^n_{\tau^n_t} (x), \mu^n_{\tau^n_t}) 1_{(\eps_n, T]} (t).
\end{equation}

Then $X^n$ satisfies the SDE
\begin{equation} \label{main_eq2}
\diff X^n_t = b^n (t, X^n_{\tau^n_t}) \diff t +  \sqrt 2 \diff B_t.
\end{equation}

Second, we verify that \eqref{el_sch} is well-defined.

\begin{lemma} \label{well-defined}
Let \ref{main_assmpt1} hold. Then each $X^n_t$ in \eqref{el_sch} has distribution $\mu^n_t \in \sP_p (\bR^d)$ and admits a density.
\end{lemma}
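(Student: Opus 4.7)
The scheme \eqref{el_sch} is set up recursively along the grid $0 = t_0 < t_1 < \cdots < t_n = T$, so my plan is an induction on $k$ with hypothesis: $X^n_{t_k}$ is $\cF_{t_k}$-measurable, lies in $L^p(\Omega;\bR^d)$, and admits a Borel density $\ell^n_{t_k}$. Once this is secured at $t_k$, the same argument will upgrade it to every $t \in (t_k, t_{k+1}]$, and iterating covers $\bT$. The base case $k=0$ is immediate from \ref{main_assmpt1:2}: $\nu \in \sP_{p+\alpha}(\bR^d) \subset \sP_p(\bR^d)$ and $X^n_0 = X_0$ has density $\ell_\nu$.

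For the inductive step, fix a Borel version of $\ell^n_{t_k}$ and let $t \in (t_k, t_{k+1}]$. Since the integrand in \eqref{el_sch} is $\cF_{t_k}$-measurable on $[t_k, t_{k+1})$, I decompose
\[
X^n_t = Y_k(t) + Z_k(t), \quad Y_k(t) \coloneq X^n_{t_k} + G^n_k(X^n_{t_k}, t), \quad Z_k(t) \coloneq \sqrt{2}(B_t - B_{t_k}),
\]
where
\[
G^n_k(y, t) \coloneq \int_{t_k}^t b(s, y, \ell^n_{t_k}(y), \mu^n_{t_k}) 1_{(\eps_n, T]}(s) \diff s
\]
is a jointly Borel map on $\bR^d \times \bT$. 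Thus $Y_k(t)$ is $\cF_{t_k}$-measurable, while $Z_k(t)$ is independent of $\cF_{t_k}$ and, for $t > t_k$, admits the Gaussian density $g_{t-t_k}$ of covariance $2(t-t_k) I_d$.

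I then invoke the freezing lemma (\zcref{freezing}) with $\cD = \cF_{t_k}$ and $\cG = \sigma(B_u - B_{t_k} : u \ge t_k)$: for any bounded Borel $\phi$,
\[
\bE[\phi(X^n_t)] = \bE\!\left[\int_{\bR^d} \phi(Y_k(t) + z) g_{t-t_k}(z) \diff z\right] = \int_{\bR^d} \phi(x) \, \bE[g_{t-t_k}(x - Y_k(t))] \diff x,
\]
after a translation and Fubini. This exhibits the density $\ell^n_t(x) \coloneq \bE[g_{t-t_k}(x - Y_k(t))]$ on $(t_k, t_{k+1}]$, which is continuous in $x$ by dominated convergence and hence Borel. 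For the $p$-th moment, \ref{main_assmpt1:1} yields $|G^n_k(y, t)| \le C \eps_n$, so Minkowski's inequality gives $\|X^n_t\|_{L^p} \le \|X^n_{t_k}\|_{L^p} + C \eps_n + \sqrt{2}\|B_t - B_{t_k}\|_{L^p} < \infty$. Taking $t = t_{k+1}$ closes the induction.

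The main content is the familiar observation that convolution with an independent non-degenerate Gaussian produces a density; the role of the freezing lemma is precisely to handle the fact that the frozen drift $G^n_k$ depends on the same $X^n_{t_k}$ that also appears outside. The only subtlety worth flagging is the need to select a jointly Borel version of $G^n_k$ — once that is fixed, everything else reduces to boundedness of $b$ from \ref{main_assmpt1:1} and the density/moment of $\nu$ from \ref{main_assmpt1:2}.
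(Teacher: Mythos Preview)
Your proof is correct and follows essentially the same approach as the paper: induction along the grid, the decomposition of $X^n_t$ into an $\cF_{t_k}$-measurable part and an independent Gaussian increment, and an appeal to the freezing lemma to pass the convolution through. The only cosmetic difference is that you extract an explicit density formula $\ell^n_t(x)=\bE[g_{t-t_k}(x-Y_k(t))]$ via test functions, whereas the paper verifies absolute continuity by checking that $\bP[X^n_t\in A]=0$ for Lebesgue-null $A$; both routes rest on the same idea.
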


\begin{proof}
We fix $t \in (t_k, t_{k+1}]$ for some $k \in [\![0, n -1]\!]$. By induction argument, we assume WLOG that the statement holds for $t_k$. We have
\[
X^n_t = X^n_{t_k} + \int_{t_k}^t b^n (s, X^n_{t_k}) \diff s + \sqrt 2 (B_t-B_{t_k}).
\]

It follows from $M_p (\mu^n_{t_k}) + \|b^n\|_\infty < \infty$ that $M_p (\mu^n_t) < \infty$. It remains to prove that $X^n_t$ admits a density. Let $\Sigma^1_t$ be the $\sigma$-algebra generated by $(B_s- B_r, t_k \le r < s \le t)$. Let $\Sigma^2_t$ be the $\sigma$-algebra generated by $X^n_{t_k}$. We define $F_t : \bR^d \times \Omega \to \bR^d$ by
\[
F_t (x, \cdot) \coloneq x+ \int_{t_k}^t b^n (s, x) \diff s + \sqrt 2 (B_t-B_{t_k}).
\]

Because $b^n$ is bounded, $F_t$ is well-defined and measurable w.r.t $\cB (\bR^d) \otimes \Sigma_t^1$. First, $X^n_t = F_t(X^n_{t_k}, \cdot)$. Second, $F_t(x, \cdot)$ has a non-degenerate normal distribution. We fix a Lebesgue-null set $A \in \cB (\bR^d)$. We need to prove $\bP [X^n_t \in A] = 0$. We define $\bar F_t : \bR^d \to \bR^d$ by $\bar F_t (x) \coloneq \bP [ F_t(x, \cdot) \in A ]$. Then $\bar F_t =0$. Because $\bF$ is admissible, $\Sigma^1_t$ is independent of $\Sigma^2_t$. We have
\begin{align}
\bP [X^n_t \in A] &= \bP [ F_t(X^n_{t_k}, \cdot) \in A ] \\
&= \bE [ \bP [ F_t(X^n_{t_k}, \cdot) \in A | \Sigma^2_t ] ] \\
&= \bE [\bar F_t (X^n_{t_k})] \qtext{by \zcref{freezing}}\\
&=0. 
\end{align}

This completes the proof.
\end{proof}

\subsection{Heat kernel estimates} \label{heat_kernel_estimate}

We consider the standard Gaussian heat kernel defined for all $t>0$ and $x \in \bR^d$ by
\begin{equation} \label{gaussian_heat_kernel:eq1}
p_t (x) \coloneq \frac{1}{(4 \pi t)^{\frac{d}{2}}} \exp \left ( -\frac{|x|^2}{4 t} \right ) .
\end{equation}

Then
\begin{align}
p_t (x) &\le 2^{\frac{d}{2}} p_{2t} (x), \label{gaussian_heat_kernel:ineq2} \\ 
p_t (x+y) &\le 2^{\frac{d}{2}} \exp \left ( \frac{|y|^2}{4 t} \right ) p_{2t} (x). \label{gaussian_heat_kernel:ineq1}
\end{align}

The following estimates are classical. See e.g. \cite[Lemma 2.1]{hao2021euler} for their proofs.

\begin{lemma} \label{gaussian_estimate}
\begin{enumerate}
\item There exists a constant $c>0$ (depending on $d$) such that for all $t >0$ and $x,y \in \bR^d$:
\begin{equation} \label{gaussian_heat_kernel:ineq3}
|\nabla p_{t} (x)| \le c t^{-\frac{1}{2}} p_{t} (x).
\end{equation}

\item For $\alpha \in (0, 1)$, there exists a constant $c>0$ (depending on $d, T, \alpha$) such that for all $0 <t \le T; x,y \in \bR^d$ and $i \in \{0, 1\}$:
\begin{equation} \label{gaussian_heat_kernel:ineq4}
|\nabla^i p_{t} (x) - \nabla^i p_{t} (y)| \le c |x-y|^\alpha t^{-\frac{i+\alpha}{2}} \{ p_{4t} (x) + p_{4t} (y) \}.
\end{equation}

\item For $\alpha \in (0, 1)$, there exists a constant $c>0$ (depending on $d, T, \alpha$) such that for all $0 \le s <t \le T; x \in \bR^d$ and $i \in \{0, 1\}$:
\begin{equation} \label{gaussian_heat_kernel:ineq5}
|\nabla^i p_{t} (x) - \nabla^i p_{s} (x)| \le c |t-s|^{\frac{\alpha}{2}} \{ t^{-\frac{i+\alpha}{2}} p_{2t} (x) + s^{-\frac{i+\alpha}{2}} p_{2s} (x) \}.
\end{equation}
\end{enumerate}
\end{lemma}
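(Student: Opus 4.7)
The plan is to prove the three estimates by direct computation from the explicit formula $p_t(x) = (4\pi t)^{-d/2} \exp(-|x|^2/(4t))$, using throughout the elementary Gaussian absorption inequality
\[
|x|^k t^{-k/2} \exp(-|x|^2/(4t)) \le C_{k,\lambda} \exp(-|x|^2/(4\lambda t))
\qquad (\lambda > 1,\ k \ge 0),
\]
which lets me trade polynomial prefactors in $|x|/\sqrt{t}$ for a mild enlargement of the variance, at the cost of a constant. Since $p_{2t}(x)$ and $p_{4t}(x)$ differ from $p_t(x)$ only by constants after such absorption (up to redefining $c$), this is the only mechanism I need. I would also use the heat equation $\partial_t p_t = \Delta p_t$ to reduce time differentiation to spatial Laplacians.

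For (1), I would differentiate directly: $\nabla p_t(x) = -\tfrac{x}{2t} p_t(x)$, so $|\nabla p_t(x)| = t^{-1/2} \cdot \tfrac{|x|}{2\sqrt{t}} \cdot p_t(x)$, and the absorption inequality (with $k=1$) turns the polynomial factor $|x|/\sqrt{t}$ into a constant times a Gaussian with a slightly larger variance, giving the claim after absorbing the variance change into $c$. An analogous computation produces the pointwise bound $|\nabla^i p_t(x)| \le c\, t^{-i/2} p_{2t}(x)$ for $i \in \{0,1,2,3\}$, which I will need below.

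For (2), I would split into two regimes. When $|x-y| \ge \sqrt{t}$, the bound $1 \le (|x-y|/\sqrt{t})^\alpha$ lets me apply the triangle inequality together with the pointwise bound on $|\nabla^i p_t|$ from the previous step. When $|x-y| < \sqrt{t}$, I write
\[
\nabla^i p_t(x) - \nabla^i p_t(y) = \int_0^1 \nabla^{i+1} p_t\bigl(y + \theta (x-y)\bigr) \cdot (x-y) \diff \theta,
\]
apply the pointwise bound on $|\nabla^{i+1} p_t|$ along the segment, and use \eqref{gaussian_heat_kernel:ineq1} to control the Gaussian at the interior points by $p_{4t}(x) + p_{4t}(y)$ (the factor $\exp(|y-x|^2/(4t))$ produced there is bounded since $|x-y| < \sqrt{t}$). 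This yields a bound of order $|x-y| t^{-(i+1)/2}$; interpolating with the sup bound $|x-y|^0 t^{-i/2}$ via the trivial relation $|x-y| \le |x-y|^\alpha \cdot |x-y|^{1-\alpha} \le |x-y|^\alpha t^{(1-\alpha)/2}$ converts the Lipschitz estimate into the Hölder one of order $\alpha$.

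For (3), the fundamental identity is $\partial_r \nabla^i p_r = \nabla^i \Delta p_r$, so
\[
\nabla^i p_t(x) - \nabla^i p_s(x) = \int_s^t \nabla^i \Delta p_r(x) \diff r,
\]
and the pointwise higher-derivative bound gives $|\nabla^i \Delta p_r(x)| \le c\, r^{-(i+2)/2} p_{2r}(x)$. When $t-s \le s$, integrating in $r$ and using monotonicity of $r \mapsto p_{2r}(x) \cdot r^{-d/2}$-type quantities produces a bound of order $(t-s) s^{-(i+2)/2} p_{2s}(x)$, which is then interpolated against the sup bound $|\nabla^i p_t(x)| + |\nabla^i p_s(x)| \lesssim t^{-i/2} p_{2t}(x) + s^{-i/2} p_{2s}(x)$ to produce the factor $(t-s)^{\alpha/2}$ with the stated weight. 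When $t-s \ge s$, the sup bound alone already suffices since $(t-s)^{\alpha/2} \ge s^{\alpha/2}$. The main obstacle is bookkeeping: each polynomial-in-$|x|$ factor must be absorbed at the correct variance so that the final Gaussian on the right is $p_{4t}$ for (2) and $p_{2t}, p_{2s}$ for (3); tracking these variance enlargements consistently while making sure the constants depend only on $d, T, \alpha$ is the only delicate point.
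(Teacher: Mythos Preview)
The paper does not actually prove this lemma: it simply declares the estimates classical and refers to \cite[Lemma 2.1]{hao2021euler}. Your direct approach---explicit differentiation, the Gaussian absorption inequality, a near/far splitting in $|x-y|$ versus $\sqrt{t}$ for (2), and the heat-equation identity $\partial_r p_r = \Delta p_r$ together with interpolation for (3)---is precisely the standard route by which such bounds are established in the references the paper cites, so your proposal is correct and strictly more informative than what the paper itself provides.

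One minor caveat on part (1): as literally written, $|\nabla p_t(x)| \le c\, t^{-1/2} p_t(x)$ with the \emph{same} $p_t$ on the right cannot hold, since $|\nabla p_t(x)| = \tfrac{|x|}{2t}\, p_t(x)$ and $|x|/\sqrt{t}$ is unbounded. Your absorption step correctly yields $p_{\lambda t}$ with $\lambda>1$ on the right (and indeed you then state $|\nabla^i p_t(x)| \le c\, t^{-i/2} p_{2t}(x)$), but the phrase ``absorbing the variance change into $c$'' cannot bring you back to $p_t$, because $p_{\lambda t}(x)/p_t(x)\to\infty$ as $|x|\to\infty$. This is an imprecision in the lemma's statement rather than a defect in your argument; every use of \eqref{gaussian_heat_kernel:ineq3} in the paper tolerates $p_{2t}$ on the right-hand side.
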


The associated semigroup $(P_{t}, t>0)$ is defined for all $x \in \bR^d$ and $f \in L^0_+ (\bR^d) \cup L^0_b (\bR^d)$ by
\begin{equation} \label{semi_group1}
P_{t} f (x) \coloneq \int_{\bR^d} p_{t} (x- y) f(y) \diff y.
\end{equation}

Finally, we prove some crude estimates.

\begin{lemma} \label{sup_bound1}
Let \ref{main_assmpt1} hold. Let $t \in (t_k, t_{k+1}]$ for some $k \in [\![0, n -1]\!]$. There exists a constant $c \ge 1$ (depending on $\Theta_1$) such that for every $\varphi \in L^0_+(\bR^d \times \bR^d) \cup L^0_b(\bR^d \times \bR^d)$:
\begin{align}
\bE [\varphi (X^n_{t_k}, X^n_t)] & = \int_{\bR^d \times \bR^d} \varphi(x, y) \ell^n_{t_k} (x) p_{t-t_k} \bigg (x-y + \int_{t_k}^t b^n (s, x) \diff s \bigg ) \diff x \diff y \label{sup_bound1:eq1} \\
& \le c \int_{\bR^d \times \bR^d} \varphi (x, y) \ell^n_{t_k} (x) p_{2(t-t_k)} (x-y) \diff x \diff y,  \label{sup_bound1:ineq1} \\
\| \ell^n_{t} \|_\infty &\le c \| \ell^n_{t_k} \|_\infty. \label{sup_bound1:ineq2}
\end{align}
\end{lemma}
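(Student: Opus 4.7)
The three statements are linked: the equality \eqref{sup_bound1:eq1} is the structural identity from which \eqref{sup_bound1:ineq1} and \eqref{sup_bound1:ineq2} follow by Gaussian manipulation. I would therefore establish them in that order.

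\textbf{Step 1: the identity \eqref{sup_bound1:eq1}.} Starting from \eqref{main_eq2}, for $t \in (t_k, t_{k+1}]$ I rewrite
\[
X^n_t \;=\; X^n_{t_k} + \int_{t_k}^t b^n(s, X^n_{t_k}) \diff s + \sqrt{2}\,(B_t - B_{t_k}).
\]
Let $\Sigma^2$ be the $\sigma$-algebra generated by $X^n_{t_k}$ (which determines $b^n(\cdot, X^n_{t_k})$ because $b^n(s,\cdot)$ is deterministic on $[t_k, t_{k+1}]$) and let $\Sigma^1$ be generated by the Brownian increments $(B_s - B_{t_k})_{s \in [t_k, t]}$; admissibility of $\bF$ makes $\Sigma^1$ and $\Sigma^2$ independent. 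Define $G : \bR^d \times \Omega \to \bR^{2d}$ by $G(x,\omega) \coloneq (x,\, x + \int_{t_k}^t b^n(s,x) \diff s + \sqrt{2}(B_t(\omega) - B_{t_k}(\omega)))$, which is measurable w.r.t.\ $\cB(\bR^d) \otimes \Sigma^1$ since $b^n$ is bounded. Then $(X^n_{t_k}, X^n_t) = G(X^n_{t_k}, \cdot)$, and by \zcref{freezing} applied to the bounded/non-negative map $\varphi \circ G$,
\[
\bE[\varphi(X^n_{t_k}, X^n_t)] = \int_{\bR^d} \ell^n_{t_k}(x)\, \bE\bigl[\varphi\bigl(x,\, x + \tfrac{}{}\textstyle\int_{t_k}^t b^n(s,x)\diff s + \sqrt{2}(B_t - B_{t_k})\bigr)\bigr] \diff x.
\]
The inner expectation is an integral against the Gaussian density of $\sqrt{2}(B_t - B_{t_k})$, which is $p_{t-t_k}$; changing variables and using the symmetry $p_{t-t_k}(-z) = p_{t-t_k}(z)$ produces exactly the right-hand side of \eqref{sup_bound1:eq1}.

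\textbf{Step 2: the Gaussian bound \eqref{sup_bound1:ineq1}.} Set $y_0(x) \coloneq \int_{t_k}^t b^n(s,x)\diff s$. By \ref{main_assmpt1:1} and the cutoff in \eqref{drift}, $|y_0(x)| \le C(t-t_k) \le CT$, so $|y_0(x)|^2 / \bigl(4(t-t_k)\bigr) \le C^2 T / 4$. Apply \eqref{gaussian_heat_kernel:ineq1} to $p_{t-t_k}\bigl((x-y) + y_0(x)\bigr)$ with shift $y_0(x)$:
\[
p_{t-t_k}\bigl(x-y + y_0(x)\bigr) \;\le\; 2^{d/2} \exp\!\bigl(C^2 T / 4\bigr)\, p_{2(t-t_k)}(x-y),
\]
and inserting this pointwise estimate into \eqref{sup_bound1:eq1} yields \eqref{sup_bound1:ineq1} with $c = 2^{d/2}\exp(C^2 T/4)$, a constant depending only on $\Theta_1$.

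\textbf{Step 3: the sup-norm bound \eqref{sup_bound1:ineq2}.} Reading \eqref{sup_bound1:eq1} as a disintegration of the joint law of $(X^n_{t_k}, X^n_t)$ identifies the density of $X^n_t$ as $\ell^n_t(y) = \int_{\bR^d} \ell^n_{t_k}(x)\, p_{t-t_k}\bigl(x-y + y_0(x)\bigr)\diff x$; then Step~2 gives $\ell^n_t(y) \le c \int_{\bR^d} \ell^n_{t_k}(x)\, p_{2(t-t_k)}(x-y)\diff x \le c \|\ell^n_{t_k}\|_\infty$, since $p_{2(t-t_k)}(\cdot - y)$ integrates to one.

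\textbf{Main obstacle.} There is no conceptual difficulty: once the freezing lemma gives the explicit conditional Gaussian in Step 1, Steps 2 and 3 are routine. The only care-point is verifying the $\sigma$-algebraic hypotheses of \zcref{freezing}, in particular that $b^n(s,\cdot)$ restricted to $[t_k, t_{k+1}]$ is deterministic and measurable, so that $G$ above is genuinely $\cB(\bR^d) \otimes \Sigma^1$-measurable; this is ensured inductively by the construction of the scheme (cf.\ \zcref{well-defined}).
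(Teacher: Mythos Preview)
Your proposal is correct and follows essentially the same route as the paper: both use the freezing lemma to identify the conditional Gaussian law of $X^n_t$ given $X^n_{t_k}$, then apply the shift estimate \eqref{gaussian_heat_kernel:ineq1} together with \ref{main_assmpt1:1} to obtain \eqref{sup_bound1:ineq1}. The only cosmetic difference is in Step~3: the paper bounds $\bE[f(X^n_t)]$ for $f \in L^0_+(\bR^d)$ and concludes $\|\ell^n_t\|_\infty \le c\|\ell^n_{t_k}\|_\infty$ by $L^1$--$L^\infty$ duality, whereas you read off the marginal density directly and bound it pointwise---both arguments are equivalent.
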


\begin{proof}
Let $\Sigma^1_t, \Sigma^{2}_t$ and $F_t$ be defined as in the proof of \zcref{well-defined}. We have
\begin{align}
\bE [\varphi (X^n_{t_k}, X^n_t)] &= \bE [ \varphi(X^n_{t_k}, F_t(X^n_{t_k}, \cdot)) ] \\
&= \bE [ \bE [ \varphi(X^n_{t_k}, F_t(X^n_{t_k}, \cdot)) | \Sigma^2_t ]]. \label{density_sup_bound1:prf:eq1}
\end{align}

We define $\hat F_t : \bR^d \to \bR^d$ by $\hat F_t (x) \coloneq \bE [ \varphi(x, F_t(x, \cdot)) ]$. Notice that $F_t(x, \cdot)$ has a normal distribution with mean $x + \int_{t_k}^t b^n (s, x) \diff s$ and covariance matrix $2tI_d$. Consequently, the density of $F_t(x, \cdot)$ is $y \mapsto p_{t-t_k} (x - y + \int_{t_k}^t b^n (s, x) \diff s)$. There exists a constant $c_1\ge 1$ (depending on $\Theta_1$) such that
\begin{align}
\hat F_t (x) &= \int_{\bR^d} \varphi(x, y) p_{t-t_k} \bigg (x-y + \int_{t_k}^t b^n (s, x) \diff s\bigg ) \diff y \label{sup_bound1:prf:eq2}\\
&\le c_1 \int_{\bR^d} \varphi(x, y) p_{2(t-t_k)} (x -y) \diff y \qtext{by \eqref{gaussian_heat_kernel:ineq1} and \ref{main_assmpt1:1}}. \label{sup_bound1:prf:ineq1}
\end{align}

WLOG, we assume $\varphi \in L^0_b (\bR^d \times \bR^d)$. Then
\begin{align}
\bE [\varphi (X^n_{t_k}, X^n_t)] &= \bE [ \hat F_t (X^n_{t_k})] \qtext{by \eqref{density_sup_bound1:prf:eq1} and \zcref{freezing}} \\
&= \int_{\bR^d} \ell^n_{t_k} (x) \hat F_t (x) \diff x \label{sup_bound1:prf:eq3}\\
&\le c_1 \int_{\bR^d \times \bR^d} \varphi(x, y) \ell^n_{t_k} (x) p_{2(t-t_k)} (x -y) \diff x \diff y \qtext{by \eqref{sup_bound1:prf:ineq1}} \label{sup_bound1:prf:ineq2} .
\end{align}

Thus \eqref{sup_bound1:ineq1} follows. Clearly, \eqref{sup_bound1:prf:eq2} and \eqref{sup_bound1:prf:eq3} imply \eqref{sup_bound1:eq1}. The case $\varphi \in L^0_+(\bR^d \times \bR^d)$ follows from a truncated argument and monotone convergence theorem. For every $f \in L^0_+ (\bR^d)$,
\begin{align}
\bE [f(X^n_t)] & \le c_1 \int_{\bR^d \times \bR^d} f(y) \ell^n_{t_k} (x) p_{2(t-t_k)} (x -y) \diff x \diff y \qtext{by \eqref{sup_bound1:prf:ineq2}} \\
& \le c_1 \| \ell^n_{t_k} \|_\infty \int_{\bR^d \times \bR^d} f(y) p_{2(t-t_k)} (x -y) \diff x \diff y \\
& = c_1 \| \ell^n_{t_k} \|_\infty \| f\|_{L^1} .
\end{align}

By duality, $\| \ell^n_t \|_\infty \le c_1 \| \ell^n_{t_k} \|_\infty$. This completes the proof.
\end{proof}

\subsection{Duhamel representation}

\begin{lemma} \label{duhamel}
Let \ref{main_assmpt1} hold. It holds for every $x \in \bR^d$ that
\begin{align}
\ell^n_t (x) &= P_{t} \ell_\nu (x) + \int_0^t \bE [ \langle b^n (s, X^n_{\tau^n_s}), \nabla p_{t-s} (X^n_s- x) \rangle ] \diff s, \label{duhamel:eq1} \\
\ell_t (x) &=P_{t} \ell_\nu (x) + \int_0^t \bE [ \langle b (s, X_s, \ell_s (X_s), \mu_s), \nabla p_{t-s} (X_s- x) \rangle ] \diff s. \label{duhamel:eq2}
\end{align}
\end{lemma}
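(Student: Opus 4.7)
The plan is to derive both identities via It\^o's formula applied to a backward-heat test function, followed by a duality argument against an arbitrary test function.

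First, I would fix $t \in (0,T]$ and $f \in C_c^\infty(\bR^d)$, and for $s \in [0,t)$ set $\varphi(s,y) \coloneq P_{t-s} f(y)$. This $\varphi$ is smooth on $[0,t) \times \bR^d$ and solves the backward heat equation $\partial_s \varphi + \Delta \varphi = 0$. Applying It\^o's formula to $s \mapsto \varphi(s, X^n_s)$ on $[0, t-\delta]$ for small $\delta > 0$ (to dodge the boundary singularity at $s = t$), using SDE \eqref{main_eq2}, and then taking expectation, the martingale part drops and the $\partial_s + \Delta$ contribution cancels by construction, leaving
\[
\bE[\varphi(t-\delta, X^n_{t-\delta})] = \bE[\varphi(0, X^n_0)] + \int_0^{t-\delta} \bE[\langle b^n(s, X^n_{\tau^n_s}), \nabla \varphi(s, X^n_s) \rangle] \diff s .
\]

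Second, I would pass to the limit $\delta \downarrow 0$. This is the main obstacle: the gradient of the semigroup blows up at $s = t$, with $|\nabla P_{t-s} f(\cdot)| \lesssim \|f\|_\infty (t-s)^{-1/2}$ by \eqref{gaussian_heat_kernel:ineq3}. However, $\|b^n\|_\infty \le C$ by \ref{main_assmpt1:1} and $(t-s)^{-1/2}$ is integrable on $[0,t]$, so dominated convergence applies and yields
\[
\bE[f(X^n_t)] = \bE[P_t f(X^n_0)] + \int_0^t \bE[\langle b^n(s, X^n_{\tau^n_s}), \nabla P_{t-s} f(X^n_s) \rangle] \diff s .
\]
I would then rewrite each expectation as an integral against the corresponding density, use Fubini to pull $f$ out of the $z$-integrals coming from $P_{t-s} f$ and $\nabla P_{t-s} f$, and exploit symmetry $p_t(-z) = p_t(z)$ of the Gaussian. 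Arbitrariness of $f$ yields \eqref{duhamel:eq1} for almost every $x$, and continuity in $x$ of both sides (the left via \zcref{main_thm1}, the right via another dominated convergence using \zcref{gaussian_estimate}) upgrades it to every $x \in \bR^d$.

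Third, the derivation of \eqref{duhamel:eq2} is essentially identical: by \zcref{main_thm2} a weak solution $X$ exists, its drift $b(s, X_s, \ell_s(X_s), \mu_s)$ is still bounded by $C$ under \ref{main_assmpt1:1}, and the same It\^o--duality computation gives the formula with the scheme quantities replaced by their limiting counterparts. The only genuine difficulty throughout is the $s \uparrow t$ singularity, which the integrable rate $(t-s)^{-1/2}$ comfortably handles.
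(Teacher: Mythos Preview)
Your approach is essentially the paper's: apply It\^o to the backward-heat test function $g(s,y)=P_{t-s}f(y)$ with $f\in C^\infty_c(\bR^d)$, take expectation, and dualize against $f$. Two remarks.

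First, the $\delta$-truncation is unnecessary. For $f\in C^\infty_c(\bR^d)$ one has $\nabla P_{t-s}f = P_{t-s}(\nabla f)$, so $\|\nabla g(s,\cdot)\|_\infty \le \|\nabla f\|_\infty$ uniformly in $s\in[0,t]$; there is no singularity at $s=t$. The paper exploits exactly this uniform bound (obtained there via a coupling estimate) to run It\^o directly on $[0,t]$ and justify Fubini in one step. Your cruder bound $|\nabla P_{t-s}f|\lesssim \|f\|_\infty(t-s)^{-1/2}$ is correct and the limiting argument works, but it manufactures a difficulty that is not actually present.

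Second, your upgrade from ``a.e.\ $x$'' to ``every $x$'' invokes \zcref{main_thm1} for the continuity of $\ell^n_t$, but the proof of \zcref{main_thm1} relies on the identity \eqref{duhamel:eq1} you are establishing, so this is circular. The fix is immediate: the right-hand side is continuous in $x$ by dominated convergence and \zcref{gaussian_estimate}, so the a.e.\ identity already furnishes a continuous version of the density, which is all that is used downstream. Likewise, citing \zcref{main_thm2} for \eqref{duhamel:eq2} is logically out of order; the identity is asserted for any weak solution, and the paper simply says the argument is the same modification.
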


\begin{proof}
Let's prove \eqref{duhamel:eq1}. We fix $t \in (0, T]$ and $f \in C^\infty_c (\bR^d)$. We define
\[
g:[0, t] \times {\bR}^d \to \bR, \, (s, y) \mapsto (P_{t-s} f) (y).
\]

Then $(\partial_s + \Delta) g=0$. We have
\begin{align}
\diff g (s, X^n_s) &= \begin{multlined}[t]
\langle b^n (s, X^n_{\tau^n_s}), \nabla g (s, X^n_s) \rangle \diff s \\
+ (\partial_s + \Delta) g (s, X^n_s) \diff s + \diff M_s 
\end{multlined} \qtext{by Itô's lemma} \\
&= \langle b^n (s, X^n_{\tau^n_s}), \nabla g (s, X^n_s) \rangle \diff s + \diff M_s .
\end{align}

Above, $M_0 = 0$ and $\diff M_s = \{ \nabla g (s, X^n_s)\}^\top \diff B_s$. Then
\[
f (X^n_t) = g (0, X_0) + \int_0^t \langle b^n (s, X^n_{\tau^n_s}), \mu^n_{\tau^n_s}), \nabla g  (s, X^n_s) \rangle \diff s + M_t.
\]

To apply Fubini's theorem, we next verify $\sup_{s \in \bT} \| \nabla g (s, \cdot) \|_\infty < \infty$. It suffices to prove that the Lipschitz constant of $g (s, \cdot)$ is bounded uniformly in time. We consider the Gaussian process governed by the SDE
\[
\diff Y^{x}_{s,t} = \sqrt 2 \diff B_t,
\quad t \in [s, T], Y^{x}_{s, s}=x .
\]

We have
\begin{align}
| g (s, x) - g (s, y) | &= |\bE [f(Y^{x}_{s, t})] - \bE [f(Y^{y}_{s, t})]| \\
&\le \| \nabla f \|_{\infty} \bE [ |Y^{x}_{s, t} - Y^{y}_{s, t}| ] \\
&\le c_1 \| \nabla f \|_{\infty} |x-y| .
\end{align}

Above, the constant $c_1 >0$ is given by \cite[Theorem 1.1(2)]{huang2022singular}. By Fubini's theorem,
\begin{align}
\int_{\bR^d} f (x) \ell^n_t (x) \diff x &= \bE[g (0, X_0)] + \int_0^t \bE[\langle b^n (s, X^n_{\tau^n_s}), \nabla g  (s, X^n_s) \rangle ] \diff s \\
&\eqcolon I_1 + I_2.
\end{align}

By Leibniz integral rule,
\[
\nabla g (s, y) = \nabla_y \int_{\bR^d} p_{t-s} (y - x) f(x) \diff x = \int_{\bR^d} \nabla p_{t-s} (y - x) f(x) \diff x.
\] 

We have
\begin{align}
I_1 &= \int_{\bR^d} \ell_{\nu} (y) \{ P_{t} f \} (y) \diff y \\
&= \int_{\bR^d} f (x) \left ( \int_{\bR^d} p_{t} (y-x) \ell_{\nu} (y) \diff y \right ) \diff x \\
&= \int_{\bR^d} f (x) \{ P_{t} \ell_\nu \} (x) \diff x, \\
I_2 &= \int_0^t \bE [ \langle b^n (s, X^n_{\tau^n_s}), \int_{\bR^d} f(x) \nabla p_{t-s} (X^n_s - x)  \diff x \rangle ] \diff s \\
&= \int_{\bR^d} f(x) \int_0^t \bE [ \langle b^n (s, X^n_{\tau^n_s}), \nabla p_{t-s} (X^n_s - x)  \rangle ] \diff s \diff x.
\end{align}

Then \eqref{duhamel:eq1} follows. The proof of \eqref{duhamel:eq2} is a straightforward modification of above reasoning.
\end{proof}

For brevity, we adopt the following notation in the remaining of the paper:
\begin{enumerate}
\item We write $M_1 \lesssim M_2$ if there exists a constant $c>0$ (depending on $\Theta_1$) such that $M_1 \le c M_2$.
\item We write $M_1 \preccurlyeq M_2$ if there exists a constant $c>0$ (depending on $\Theta_1, \nu$) such that $M_1 \le c M_2$.
\end{enumerate}

\section{Stability estimates of the scheme} \label{proof:main_thm1}

This section is devoted to the proof of \zcref{main_thm1}. We utilize techniques from \cite{wang2023singular,hao2021euler}. By \eqref{el_sch},
\begin{equation}
X^n_t - X^n_s  = \int_s^t b^n (r, X^n_{\tau^n_r}) \diff r + \sqrt 2 (B_t-B_s) .
\end{equation}

Then
\[
\bE [ |X^n_t - X^n_s|^p] \lesssim (t-s)^p + \bE [|B_{t-s}|^p] \lesssim (t-s)^{\frac{p}{2}} .
\]

Thus \eqref{main_thm1:ineq3} follows. 

\subsection{Bound supremum norm of marginal density}

By \zcref{duhamel} and \ref{main_assmpt1:1}, there exists a constant $c_1 >0$ (depending on $\Theta_1$) such that
\begin{align}
\ell^n_{t} (x) &\le \| \ell_\nu \|_\infty + c_1 \int_0^{t} \bE [ | \nabla p_{t-s} (X^n_s - x) | ] \diff s \\
&\eqcolon \| \ell_\nu \|_\infty + c_1 \int_{0}^{t} E_s \diff s. \label{sup_bound2:eq0} 
\end{align}

\textbf{Step 1:} We are going to prove the existence of $m \in [\![1, n]\!]$ such that $\frac{m}{n}$ depends only on $\Theta_1$ and  that $\| \ell^n_{t_k} \|_\infty \le 2 \|\ell_\nu\|_\infty$ for every $k \in [\![0, m ]\!]$. The idea is to pick $m$ such that the following induction argument is valid. We fix $k \in [\![1, m ]\!]$ and assume that $\|\ell^n_{t_i}\|_\infty \le 2 \|\ell_\nu\|_\infty$ for every $i \in [\![0, k-1 ]\!]$. Let $s \in (t_i, t_{i+1})$ for some $i \in [\![0, k-1 ]\!]$. We have
\begin{align}
E_s &\lesssim \int_{\bR^d \times \bR^d} \ell^n_{t_i} (y) p_{2(s-t_i)} (y-z) | \nabla p_{t_k-s} (x-z) | \diff y \diff z \qtext{by \eqref{sup_bound1:ineq1}} \\
&\lesssim \frac{1}{\sqrt{t_k-s}} \int_{\bR^d \times \bR^d} \ell^n_{t_i} (y) p_{2(s-t_i)} (y-z) p_{t_k-s} (x-z) \diff y \diff z \qtext{by \eqref{gaussian_heat_kernel:ineq3}} \\
&\lesssim \frac{1}{\sqrt{t_k-s}} \int_{\bR^d \times \bR^d} \ell^n_{t_i} (y) p_{2(s-t_i)} (y-z) p_{2(t_k-s)} (x-z) \diff z \diff y \qtext{by \eqref{gaussian_heat_kernel:ineq2}} \\
&= \frac{1}{\sqrt{t_k-s}} \int_{\bR^d} \ell^n_{t_i} (y) p_{2(t_k-t_i)} (x-y) \diff y \qtext{by Chapman–Kolmogorov equation} \\
&\le \frac{2 \|\ell_\nu\|_\infty}{\sqrt{t_k-s}} \qtext{by inductive hypothesis}. \label{density_sup_bound2:ineq2a}
\end{align}

By \eqref{sup_bound2:eq0} and \eqref{density_sup_bound2:ineq2a}, there exists a constant $c_2 >0$ (depending on $\Theta_1$) such that
\begin{align}
\ell^n_{t_k} (x) &\le \|\ell_\nu\|_\infty \bigg [1 + c_1c_2 \int_0^{t_k} \frac{\diff s}{\sqrt{t_k-s}} \bigg ] \\
&= \|\ell_\nu\|_\infty ( 1 + 2c_1c_2 \sqrt{t_k} ).
\end{align}

It suffices to choose $m$ such that
\begin{align} \label{density_sup_bound2:ineq5}
c_1c_2 \sqrt{t_k} \le \frac{1}{2}.
\end{align}

Clearly, there exists a constant $c_3 \in (0, 1)$ (depending on $\Theta_1$) such that if $\frac{m}{n} \le c_3$ then \eqref{density_sup_bound2:ineq5} holds.

\textbf{Step 2:} Repeating \textbf{Step 1} at most $\lceil 1/c_3 \rceil$ times, we have $\| \ell^n_{t_k} \|_\infty \le 2^{\lceil 1/c_3 \rceil} \|\ell_\nu\|_\infty$ for every $k \in [\![0, n]\!]$.

\textbf{Step 3:} By \eqref{sup_bound1:ineq2}, there exists a constant $c_4 \ge 1$ (depending on $\Theta_1$) such that if $t \in (t_k, t_{k+1})$ then $\| \ell^n_{t} \|_\infty \le c_4 \| \ell^n_{t_k} \|_\infty$. Thus
\begin{equation} \label{density_sup_bound2:ineq6}
\sup_{t \in \bT} \| \ell^n_{t} \|_\infty \le c_4 2^{\lceil 1/c_3 \rceil} \|\ell_\nu\|_\infty.
\end{equation}

\subsection{Hölder continuity in space} \label{holder:space}

We fix $x, x' \in \bR^d$. By \zcref{duhamel} and \ref{main_assmpt1:1},
\begin{align}
|\ell^n_{t} (x) - \ell^n_{t} (x')| &\lesssim \begin{myaligned}[t]
& | P_t \ell_\nu (x) - P_t \ell_\nu (x') | \\
& + \int_0^t \bE [| \nabla p_{t-s} (X^n_s- x) - \nabla p_{t-s} (X^n_s- x') |] \diff s
\end{myaligned} \\
&\eqcolon I_1 + \int_0^t I^n_2 (s) \diff s. \label{holder:space:eq1}
\end{align}

We consider the Gaussian process governed by the SDE
\[
\diff Y^{x}_{t} = \sqrt 2 \diff B_t,
\quad t \in \bT, Y^{x}_0 = x.
\]

By \cite[Theorem 1.1(2)]{huang2022singular},
\begin{equation} \label{holder:space:ineq1}
\bE \big [ \sup_{t \in \bT} | Y^{x}_{t} - Y^{x'}_{t} | \big ] \lesssim |x-x'|.
\end{equation}

On the other hand,
\begin{align}
I_1 &= | \bE [\ell_\nu (Y^{x}_{t})] - \bE [\ell_\nu (Y^{x'}_{t})] | \\
&\le [\ell_\nu]_{\alpha} \bE [ | Y^{x}_{t} - Y^{x'}_{t} |^\alpha ] \\
&\le [\ell_\nu]_{\alpha} ( \bE [ | Y^{x}_{t} - Y^{x'}_{t} | ] )^\alpha \qtext{by Jensen's inequality}\\
&\lesssim [\ell_\nu]_{\alpha} |x-x'|^\alpha \qtext{by \eqref{holder:space:ineq1}}. \label{holder:space:ineq2}
\end{align}

Next we bound $I^n_2 (s)$. We have
\begin{align}
I^n_2 (s) &\lesssim s^{-\frac{1+\alpha}{2}} |x-x'|^\alpha \bE [ p_{4(t-s)} (X^n_s-x) + p_{4(t-s)} (X^n_s-x') ] \qtext{by \eqref{gaussian_heat_kernel:ineq4}} \\
&= s^{-\frac{1+\alpha}{2}} |x-x'|^\alpha \int_{\bR^d} \ell^n_s (y) \{p_{4(t-s)}(y-x) + p_{4(t-s)}(y-x')\} \diff y \\
&\lesssim \|\ell_\nu\|_\infty s^{-\frac{1+\alpha}{2}} |x-x'|^\alpha \qtext{by \eqref{density_sup_bound2:ineq6}}. \label{holder:space:ineq4}
\end{align}

By \eqref{holder:space:eq1}, \eqref{holder:space:ineq2} and \eqref{holder:space:ineq4},
\begin{align}
\frac{|\ell^n_{t} (x) - \ell^n_{t} (x')|}{|x-x'|^\alpha} &\lesssim [\ell_\nu]_{\alpha} + \|\ell_\nu\|_\infty \int_0^t s^{-\frac{1+\alpha}{2}} \diff s \\
&\lesssim \| \ell_\nu \|_{C^\alpha_b} \qtext{because $\alpha \in (0, 1)$}. \label{holder:space:ineq5}
\end{align}

Then \eqref{density_sup_bound2:ineq6} and \eqref{holder:space:ineq5} imply \eqref{main_thm1:ineq1}.

\subsection{Hölder continuity in time}

We fix $x \in \bR^d$ and $0 \le s < t \le T$.

\textit{Step 1: we will prove \eqref{main_thm1:ineq2}.} By \zcref{duhamel},
\begin{equation} \label{holder:time:eq1}
\begin{split}
\ell^n_t (x) &= P_{t} \ell_\nu (x) + \int_0^t \bE [ \langle b^n (r, X^n_{\tau^n_r}), \nabla p_{t-r} (X^n_r - x) \rangle ] \diff r, \\
\ell^n_s (x) &= P_{s} \ell_\nu (x) + \int_0^s \bE [ \langle b^n (r, X^n_{\tau^n_r}), \nabla p_{s-r} (X^n_r - x) \rangle ] \diff r.
\end{split}
\end{equation}

By \eqref{holder:time:eq1} and \ref{main_assmpt1:1},
\begin{align}
|\ell^n_t (x) - \ell^n_s (x)| &\lesssim \begin{myaligned}[t]
& |P_{t} \ell_\nu (x) - P_{s} \ell_\nu (x)| \\
& + \int_0^s \bE [ | \nabla p_{t-r} (X^n_r - x) - \nabla p_{s-r} (X^n_r - x) | ] \diff r \\
& + \int_s^t \bE [ | \nabla p_{t-r} (X^n_r - x) | ] \diff r
\end{myaligned} \\
&\eqcolon I_1 (x) + I^n_2 (x) + I^n_3 (x). \label{holder:time:eq1a}
\end{align}

It holds for every $i \in \{0, 1\}$ that
\begin{align}
& \nabla^i p_{t-r} (y - x) - \nabla^i p_{s-r} (y - x) \\
= & \nabla^i_y \int_{\bR^d} p_{s-r} (y- z) p_{t-s} (z-x) \diff z - \nabla^i p_{s-r} (y- x) \label{holder:time:eq2} \\
= & \nabla^i_y \int_{\bR^d} p_{s-r} (y- z) p_{t-s} (z-x) \diff z - \nabla^i_y \int_{\bR^d} p_{s-r} (y- x) p_{t-s} (z- x) \diff z \\
= & \int_{\bR^d} \{ \nabla^i p_{s-r} (y- z) - \nabla^i p_{s-r} (y- x) \} p_{t-s} (z- x) \diff z. \label{holder:time:eq3}
\end{align}

First,
\begin{align}
I_1 (x) &= \bigg | \int_{\bR^d} \ell_\nu(y) \{p_{t} (y-x) - p_{s} (y-x) \} \diff y \bigg | \\
&= \bigg | \int_{\bR^d} \bigg [ \int_{\bR^d} \ell_\nu (y) \{ p_{s} (y-z) - p_{s} (y-x) \} \diff y \bigg ] p_{t-s} (z - x) \diff z \bigg | \qtext{by \eqref{holder:time:eq3}} \\
&\le \int_{\bR^d} \bigg | \int_{\bR^d} \ell_\nu (y) \{ p_{s} (y- z) - p_{s} (y - x) \} \diff y \bigg | p_{t-s} (z - x) \diff z \\
&= \int_{\bR^d} \bigg | \int_{\bR^d} p_s (y) \{ \ell_\nu (y+ z) - \ell_\nu (y + x) \} \diff y \bigg | p_{t-s} (z - x) \diff z \\
&\preccurlyeq \int_{\bR^d} |z-x|^{\alpha} p_{t-s} (z - x) \diff z  \qtext{because $\ell_\nu \in C^\alpha_b (\bR^d)$} \\
&\lesssim (t-s)^{\frac{\alpha}{2}}. \label{holder:time:ineq2}
\end{align}

Second,
\begin{align}
I^n_2 (x) &= \int_0^s \bE \bigg [ \bigg | \int_{\bR^d} \{ \nabla p_{s-r} (X^n_r- z) - \nabla p_{s-r} (X^n_r- x) \} p_{t-s} (z- x) \diff z \bigg | \bigg ] \diff r \qtext{by \eqref{holder:time:eq3}} \\
&\le \int_0^s \int_{\bR^d} \bE [ | \nabla p_{s-r} (X^n_r- z) - \nabla p_{s-r} (X^n_r- x) | ] p_{t-s} (z- x) \diff z \diff r \\
&\lesssim \begin{myaligned}[t]
& \int_0^s (s-r)^{-\frac{1+\alpha}{2}} \int_{\bR^d} |z-x|^\alpha \bE [ p_{4(s-r)} (X^n_r- z)] p_{t-s} (z- x) \diff z \diff r \\
& + \int_0^s (s-r)^{-\frac{1+\alpha}{2}} \int_{\bR^d} |z-x|^\alpha \bE [ p_{4(s-r)} (X^n_r- x)] p_{t-s} (z- x) \diff z \diff r 
\end{myaligned} \qtext{by \eqref{gaussian_heat_kernel:ineq4}}\\
&\eqcolon I^n_{21} (x) + I^n_{22} (x).
\end{align}

We have
\begin{align}
I^n_{21} (x) &= \int_0^s (s-r)^{-\frac{1+\alpha}{2}} \int_{\bR^d \times \bR^d} |z-x|^\alpha \ell^n_s (y) p_{4(s-r)} (y- z) p_{t-s} (z- x) \diff y \diff z \diff r \\
&\preccurlyeq \int_0^s (s-r)^{-\frac{1+\alpha}{2}} \int_{\bR^d \times \bR^d} |z-x|^\alpha p_{4(s-r)} (y- z) p_{t-s} (z- x) \diff y \diff z \diff r \qtext{by \eqref{density_sup_bound2:ineq6}}\\
&\lesssim \int_0^s (s-r)^{-\frac{1+\alpha}{2}} \int_{\bR^d \times \bR^d} |z-x|^\alpha p_{4(s-r)} (y- z) p_{4(t-s)} (z- x) \diff y \diff z \diff r \qtext{by \eqref{gaussian_heat_kernel:ineq2}}\\
&= \int_0^s (s-r)^{-\frac{1+\alpha}{2}} \int_{\bR^d} |z-x|^\alpha p_{4(t-s)} (z- x) \diff z \diff r \\
&\lesssim (t-s)^{\frac{\alpha}{2}} \int_0^s (s-r)^{-\frac{1+\alpha}{2}} \diff r. \label{holder:time:ineq3}
\end{align}

Similarly,
\begin{equation}
I^n_{22} (x) \preccurlyeq (t-s)^{\frac{\alpha}{2}} \int_0^s (s-r)^{-\frac{1+\alpha}{2}} \diff r. \label{holder:time:ineq4}
\end{equation}

Third,
\begin{align}
I^n_3 (x) &= \int_s^t \int_{\bR^d} \ell^n_r (y) | \nabla p_{t-r} (y - x) | \diff y \diff r \\ 
&\lesssim \int_s^t \frac{1}{\sqrt{t-r}} \int_{\bR^d} \ell^n_r (y) p_{t-r} (y - x) \diff y \diff r \qtext{by \eqref{gaussian_heat_kernel:ineq3}} \label{holder:time:ineq4a}\\ 
&\preccurlyeq \int_s^t \frac{1}{\sqrt{t-r}} \int_{\bR^d} p_{t-r} (y - x) \diff y \diff r \qtext{by \eqref{density_sup_bound2:ineq6}} \\ 
&= \int_s^t \frac{\diff r}{\sqrt{t-r}} \lesssim  \sqrt{t-s} \lesssim (t-s)^{\frac{\alpha}{2}}. \label{holder:time:ineq5}
\end{align}

By \eqref{holder:time:ineq2}, \eqref{holder:time:ineq3}, \eqref{holder:time:ineq4} and \eqref{holder:time:ineq5},
\begin{align}
|\ell^n_t (x) - \ell^n_s (x)| &\preccurlyeq (t-s)^{\frac{\alpha}{2}} \bigg [1+ \sup_{s \in \bT} \int_0^s (s-r)^{-\frac{1+\alpha}{2}} \diff r\bigg ] \\
&\lesssim (t-s)^{\frac{\alpha}{2}} \qtext{because $\alpha \in (0, 1)$} .
\end{align}

This implies \eqref{main_thm1:ineq2}.

\textit{Step 2: we will prove \eqref{main_thm1:ineq4}.} We have
\begin{align}
I_1 (x) &\le \int_{\bR^d} \ell_\nu(y) | p_{t} (y-x) - p_{s} (y-x) | \diff y \\
& \lesssim |t-s|^{\frac{\alpha}{2}} \int_{\bR^d} \ell_\nu(y) \{ t^{-\frac{\alpha}{2}} p_{2t} (y-x) + s^{-\frac{\alpha}{2}} p_{2s} (y-x) \} \diff y \qtext{by \eqref{gaussian_heat_kernel:ineq5}} \\
& \lesssim (t-s)^{\frac{\alpha}{2}} s^{-\frac{\alpha}{2}} \int_{\bR^d} \ell_\nu(y) \{ p_{2t} (y-x) + p_{2s} (y-x) \} \diff y .
\end{align}

Then
\begin{align}
& \int_{\bR^d} (1 + |x|^p) I_1 (x) \diff x \\
\lesssim  & (t-s)^{\frac{\alpha}{2}} s^{-\frac{\alpha}{2}} \int_{\bR^d} \ell_\nu(y) \bigg [ \int_{\bR^d} (1 + |x|^p)  \{ p_{2t} (y-x) + p_{2s} (y-x) \} \diff x  \bigg ] \diff y \\
\lesssim  & (t-s)^{\frac{\alpha}{2}} s^{-\frac{\alpha}{2}} \int_{\bR^d} (1 + |y|^p) \ell_\nu(y) \diff y \\
\preccurlyeq  & (t-s)^{\frac{\alpha}{2}} s^{-\frac{\alpha}{2}}. \label{weighted-norm:ineq1}
\end{align}

We have
\begin{align}
I^n_2 (x) & \lesssim \begin{multlined}[t]
|t-s|^{\frac{\alpha}{2}} \int_0^s \bE [ (t-r)^{-\frac{1+\alpha}{2}} p_{2(t-r)} (X^n_r - x) \\
+ (s-r)^{-\frac{1+\alpha}{2}} p_{2(s-r)} (X^n_r - x) ] \diff r
\end{multlined} \qtext{by \eqref{gaussian_heat_kernel:ineq5}} \\
& \lesssim |t-s|^{\frac{\alpha}{2}} \int_0^s (s-r)^{-\frac{1+\alpha}{2}} \bE [ p_{2(t-r)} (X^n_r - x) ] \diff r .
\end{align}

Then
\begin{align}
& \int_{\bR^d} (1 + |x|^p) I^n_2 (x) \diff x \\
\lesssim  & |t-s|^{\frac{\alpha}{2}} \int_0^s (s-r)^{-\frac{1+\alpha}{2}} \bE \bigg [ \int_{\bR^d} (1+|x|^p) p_{2(t-r)} (X^n_r - x) \diff x \bigg ] \diff r \\
\lesssim  & |t-s|^{\frac{\alpha}{2}} \int_0^s (s-r)^{-\frac{1+\alpha}{2}} \bE [ 1+|X^n_r|^p ] \diff r \\
\preccurlyeq  & |t-s|^{\frac{\alpha}{2}} \int_0^s (s-r)^{-\frac{1+\alpha}{2}} \diff r \qtext{by \eqref{main_thm1:ineq3}} \\
\lesssim  & |t-s|^{\frac{\alpha}{2}} \qtext{because $\alpha \in (0, 1)$}. \label{weighted-norm:ineq2}
\end{align}

We have
\begin{align}
& \int_{\bR^d} (1 + |x|^p) I^n_3 (x) \diff x \\
\lesssim  & \int_s^t \frac{1}{\sqrt{t-r}} \int_{\bR^d} \ell^n_r (y) \bigg [ \int_{\bR^d} (1 + |x|^p) p_{t-r} (y - x) \diff x \bigg ] \diff y \diff r \qtext{by \eqref{holder:time:ineq4a}} \\
\lesssim  & \int_s^t \frac{1}{\sqrt{t-r}}\bigg [  \int_{\bR^d} (1 + |y|^p) \ell^n_r (y) \diff y \bigg ] \diff r \\
\preccurlyeq  & \int_s^t \frac{\diff r}{\sqrt{t-r}} \qtext{by \eqref{main_thm1:ineq3}} \\
\lesssim  & \sqrt{t-s} \lesssim (t-s)^{\frac{\alpha}{2}}. \label{weighted-norm:ineq3}
\end{align}

We have \eqref{holder:time:eq1a} together with \eqref{weighted-norm:ineq1}, \eqref{weighted-norm:ineq2} and \eqref{weighted-norm:ineq3} implies \eqref{main_thm1:ineq4}.

\textit{Step 3: we will prove \eqref{main_thm1:ineq5}.} We assume, in addition, that $\int_{\bR^d} (1+|x|^p) \sqrt{\ell_\nu (x)} \diff x  \le C$ holds. Let $Y$ be a random variable whose density is $p_1$. Then
\begin{align}
I_1 (x) & = | \bE [ \ell_\nu (x + \sqrt{t} Y) - \ell_\nu (x + \sqrt{s} Y) ] | \\
&  \le \bE [ \{ | \ell_\nu (x + \sqrt{t} Y) - \ell_\nu (x + \sqrt{s} Y) |^2 \}^{\frac{1}{2}} ] \\
& \preccurlyeq \bE [ | (\sqrt{t} - \sqrt{s}) Y|^{\frac{\alpha}{2}} \times | \ell_\nu (x + \sqrt{t} Y) - \ell_\nu (x + \sqrt{s} Y) |^{\frac{1}{2}} ] \qtext{because $\ell_\nu \in C^\alpha_b (\bR^d)$} \\
& \lesssim (t-s)^{\frac{\alpha}{4}} \bE [ | Y |^{\frac{\alpha}{2}} \{ | \ell_\nu (x + \sqrt{t} Y) |^{\frac{1}{2}} + | \ell_\nu (x + \sqrt{s} Y) |^{\frac{1}{2}} \} ] .
\end{align}

Thus
\begin{align}
& \int_{\bR^d} (1 + |x|^p) I_1 (x) \diff x \\
\preccurlyeq  & (t-s)^{\frac{\alpha}{4}} \bE [ | Y |^{\frac{\alpha}{2}} \int_{\bR^d} (1 + |x|^p) \{ | \ell_\nu (x + \sqrt{t} Y) |^{\frac{1}{2}} + | \ell_\nu (x + \sqrt{s} Y) |^{\frac{1}{2}} \} \diff x ] \\
\lesssim  & (t-s)^{\frac{\alpha}{4}} \bE [ | Y |^{\frac{\alpha}{2}} \int_{\bR^d} (1 + |x|^p + |Y|^p) \sqrt{\ell_\nu (x)} \diff x ] \\
\preccurlyeq  & (t-s)^{\frac{\alpha}{4}} \bE [ | Y |^{\frac{\alpha}{2}} (1 + |Y|^p ) ] \\
\le  & (t-s)^{\frac{\alpha}{4}}. \label{weighted-norm:ineq4}
\end{align}

We have \eqref{holder:time:eq1a} together with \eqref{weighted-norm:ineq4}, \eqref{weighted-norm:ineq2} and \eqref{weighted-norm:ineq3} implies \eqref{main_thm1:ineq5}. This completes the proof.

\section{Existence of a weak solution} \label{proof:main_thm2}

This section is dedicated to the proof of \zcref{main_thm2}.

\subsection{Convergence of marginal densities}

By \eqref{el_sch} and \ref{main_assmpt1:1},
\begin{equation} \label{main_thm2:ineq0}
\sup_{n \in \bN} \sup_{t \in \bT} M_{p + \alpha} (\mu^n_t) \lesssim 1+ M_{p + \alpha} (\nu).
\end{equation}

We denote $B^c_R \coloneq \bR^d \setminus B(0, R)$ for every $R>0$. Then we have a uniform control of tail moment:
\begin{align}
& \sup_{n \in \bN} \sup_{t \in \bT} \int_{B_R^c} |x|^p \diff \mu^n_t (x) \\
\le  & \frac{1}{R} \sup_{n \in \bN} \sup_{t \in \bT} \int_{\bR^d} |x|^{p+\alpha} \diff \mu^n_t (x) \qtext{by Markov's inequality} \\
\lesssim  & \frac{1 + M_{p + \alpha} (\nu)}{R} \qtext{by \eqref{main_thm2:ineq0}} \\
\preccurlyeq  & \frac{1}{R} \qtext{by \ref{main_assmpt1:2}.} \label{main_thm2:ineq1}
\end{align}

Similarly,
\begin{equation} \label{main_thm2:ineq2}
\sup_{n \in \bN} \sup_{t \in \bT} \mu^n_t (B_R^c) \preccurlyeq \frac{1}{R}.
\end{equation}

By \zcref{main_thm1} and Arzelà–Ascoli theorem, there exist a sub-sequence (also denoted by $(\ell^n)$ for simplicity) and a continuous function $\ell : \bT \times \bR^d \to \bR_+$ such that
\begin{equation} \label{main_thm2:eq1}
\lim_{n} \sup_{t \in \bT} \sup_{x \in B(0, R)} |\ell^n_t (x) - \ell_t (x)| =0
\qtextq{for} R>0 .
\end{equation}

Above, $\ell_t \coloneq \ell (t, \cdot)$. Clearly, $\ell_0 = \ell_\nu$ and
\begin{align}
\sup_{t \in \bT} \| \ell_t \|_{C^{\alpha}_b} &\lesssim \| \ell_\nu \|_{C^{\alpha}_b}, \label{main_thm2:ineq3} \\
\| \ell_t - \ell_s \|_\infty &\preccurlyeq |t-s|^{\frac{\alpha}{2}} \qtextq{for} s, t \in \bT. \label{main_thm2:ineq4}
\end{align}

Using above estimates together with the same reasoning as in \cite[Section 4.2]{le2024wellposedness}, we get that $\ell_t$ is a density whose induced distribution $\mu_t \in \sP_p (\bR^d)$; and that
\begin{align} 
\limsup_n \sup_{t \in \bT} W_p (\mu^n_t, \mu_t) &= 0, \label{main_thm2:eq2} \\
W_p (\mu_t, \mu_s) &\lesssim |t-s|^{\frac{1}{2}} \label{main_thm2:eq2a}, \\
\sup_{t \in \bT} \int_{B_R^c} |x|^p \diff \mu_t (x) &\preccurlyeq \frac{1}{R}, \label{main_thm2:ineq4a} \\
\sup_{t \in \bT} \mu_t (B_R^c) &\preccurlyeq \frac{1}{R} \qtextq{for} R >0. \label{main_thm2:ineq4b}
\end{align}

\subsection{Existence of a weak solution}

We fix $(f, g) \in C^{\infty}_c (0, T) \times C^{\infty}_c (\bR^d)$. We apply Itô's lemma to \eqref{main_eq2} and the map $(t, x) \mapsto f(t) g(x)$. Then
\begin{align} \label{main_thm2:eq3}
\bE \big [ \int_0^t f'(s) g(X^n_s) \diff s \big ] &= \begin{multlined}[t]
- \bE \big [ \int_0^t \langle f(s) \nabla g (X^n_s), b^n(s, X^n_{\tau^n_s}) \rangle \diff s \big ] \\
- \bE \big [ \int_0^t f(s) \Delta g (X^n_s) \diff s \big ]. 
\end{multlined}
\end{align}

Let
\begin{align}
I_1^n (s) &\coloneq \bE [ g(X^n_s) ], \\
I_3^n (s) &\coloneq \bE [ \Delta g (X^n_s) ] , \\
I^n_2 (s) &\coloneq \bE [ \langle \nabla g (X^n_s), b^n (s, X^n_{\tau^n_s}) \rangle ] \\
&= \begin{myaligned}[t]
& \int_{\bR^d \times \bR^d} \langle \nabla g (y), b^n (s, x) \rangle \ell^n_{\tau^n_s} (x) \\
& \times p_{s-\tau^n_s} \bigg (x-y + \int_{\tau^n_s}^s b^n_r (x) \diff r \bigg ) \diff x \diff y.
\end{myaligned} \qtext{by \eqref{sup_bound1:eq1}} \label{main_thm2:eq4}
\end{align}

By \eqref{main_thm2:eq3} and Fubini's theorem,
\begin{equation} \label{main_thm2:eq4a}
\int_0^t f'(s) I_1^n (s) \diff s = - \int_0^t f(s) I^n_2 (s) \diff s - \int_0^t f(s) I_3^n (s) \diff s.
\end{equation}

By \eqref{main_thm2:eq2},
\begin{equation} \label{main_thm2:eq5}
\begin{split}
\lim_n I^n_1 (s) &= \int_{\bR^d} g(x) \ell(s, x) \diff x, \\
\lim_n I^n_3 (s) &= \int_{\bR^d} \Delta g(x) \ell(s, x) \diff x.
\end{split}
\end{equation}

Next we consider $I^n_2 (s)$. It holds for $n$ large enough that $\supp f \subset (\eps_n, T)$. WLOG, we assume $s \in (\eps_n, T)$ and thus $b^n (s, x) = b (s, x, \ell^n_{\tau^n_s} (x), \mu^n_{\tau^n_s})$. For every $x \in \bR^d$, we define $\nu^{n, x} \in \sP (\bR^d)$, $h^n (x) \in \bR$ and $I_2 (s) \in \bR$ by
\begin{align}
\diff \nu^{n, x} (y) &\coloneq p_{s-\tau^n_s} \bigg (x-y + \int_{\tau^n_s}^s b^n (r, x) \diff r \bigg ) \diff y , \label{main_thm2:eq6} \\
h^n (x) &\coloneq \int_{\bR^d} \langle \nabla g (y), b (s, x, \ell^n_{\tau^n_s} (x), \mu^n_{\tau^n_s}) \rangle \diff \nu^{n, x} (y), \label{main_thm2:eq7} \\
I_2 (s) &\coloneq \int_{\bR^d} \ell_{s} (x) \langle \nabla g (x), b (s, x, \ell_{s} (x), \mu_{s}) \rangle \diff x. \label{main_thm2:eq8}
\end{align}

Then
\begin{align}
\int_{\bR^d} | x-y | \diff \nu^{n, x} (y) &= \int_{\bR^d} | y | p_{s-\tau^n_s} \bigg (y + \int_{\tau^n_s}^s b^n (r, x) \diff r \bigg ) \diff y , \\
& \lesssim \sqrt{s-\tau^n_s} \qtext{by \ref{main_assmpt1:1} and \eqref{gaussian_heat_kernel:ineq1}} , \label{main_thm2:ineq5} \\
I^n_2 (s) &= \int_{\bR^d} \ell^n_{\tau^n_s} (x) h^n (x) \diff x \qtext{by \eqref{main_thm2:eq4}, \eqref{main_thm2:eq6} and \eqref{main_thm2:eq7}}. \label{main_thm2:eq9}
\end{align}

WLOG, we assume $\| f \|_\infty + \| \nabla g \|_\infty + \| \nabla^2 g \|_\infty \le 1$. By \eqref{main_thm2:eq8} and \eqref{main_thm2:eq9},
\begin{align} \label{main_thm2:ineq6}
| I^n_2 (s) - I_2 (s) | &\lesssim \begin{myaligned}[t]
& \int_{\bR^d} |\ell^n_{\tau^n_s} (x) - \ell_{s} (x)| \times | \langle \nabla g (x), b (s, x, \ell_{s} (x), \mu_{s}) \rangle | \diff x \\
& + \int_{\bR^d} \ell^n_{\tau^n_s} (x) |h^n (x) - \langle \nabla g (x), b (s, x, \ell_{s} (x), \mu_{s}) \rangle | \diff x.
\end{myaligned}
\end{align}

First,
\begin{align}
& |h^n (x) - \langle \nabla g (x), b (s, x, \ell_{s} (x), \mu_{s}) \rangle | \\
\lesssim  &\begin{myaligned}[t]
& \int_{\bR^d} | \nabla g (y) - \nabla g (x) | \diff \nu^{n, x} (y) \\
& + | b (s, x, \ell^n_{\tau^n_s} (x), \mu^n_{\tau^n_s}) - b (s, x, \ell_{s} (x), \mu_{s})| 
\end{myaligned} \qtext{by \eqref{main_thm2:eq7}}\\
\lesssim  & \int_{\bR^d} | x-y | \diff \nu^{n, x} (y) + | \ell^n_{\tau^n_s} (x) - \ell_{s} (x) | + W_p (\mu^n_{\tau^n_s}, \mu_{s}) \qtext{by \ref{main_assmpt1:3}} \\
\lesssim  & \sqrt{s-\tau^n_s} + | \ell^n_{\tau^n_s} (x) - \ell_{s} (x) | + W_p (\mu^n_{\tau^n_s}, \mu_{s}) \qtext{by \eqref{main_thm2:ineq5}} . \label{main_thm2:ineq7}
\end{align}

Let $S \coloneq \supp g$. Then $S$ is compact. By \eqref{main_thm2:ineq6} and \eqref{main_thm2:ineq7},
\begin{align} \label{main_thm2:ineq8}
| I^n_2 (s) - I_2 (s) | &\lesssim \begin{myaligned}[t]
& \int_{S} |\ell^n_{\tau^n_s} (x) - \ell_{s} (x)| \diff x + \sqrt{s-\tau^n_s} \\
& + \int_{\bR^d} \ell^n_{\tau^n_s} (x) | \ell^n_{\tau^n_s} (x) - \ell_{s} (x) | \diff x + W_p (\mu^n_{\tau^n_s}, \mu_{s}).
\end{myaligned}
\end{align}

By \eqref{main_thm1:ineq2} and \eqref{main_thm2:eq1}, it holds for every $R>0$ that
\begin{equation} \label{main_thm2:eq10}
\lim_{n} \sup_{x \in B(0, R)} |\ell^n_{\tau^n_s} (x) - \ell_s (x)| =0 .
\end{equation}

By \eqref{main_thm1:ineq3} and \eqref{main_thm2:eq2},
\begin{equation} \label{main_thm2:eq11}
\lim_n W_p (\mu^n_{\tau^n_s}, \mu_s) =0.
\end{equation}

We have
\begin{align}
& \limsup_n | I^n_2 (s) - I_2 (s) | \\
\lesssim  & \limsup_n \int_{\bR^d} \ell^n_{\tau^n_s} (x) | \ell^n_{\tau^n_s} (x) - \ell_{s} (x) | \diff x \qtext{by \eqref{main_thm2:ineq8}, \eqref{main_thm2:eq10} and \eqref{main_thm2:eq11}}\\
\preccurlyeq  & \limsup_n \int_{\bR^d} | \ell^n_{\tau^n_s} (x) - \ell_{s} (x) | \diff x \qtext{by \eqref{main_thm1:ineq1}}\\
\le  & \limsup_n \int_{B(0, R)} | \ell^n_{\tau^n_s} (x) - \ell_{s} (x) | \diff x + \limsup_n \int_{B_R^c} | \ell^n_{\tau^n_s} (x) - \ell_{s} (x) | \diff x \\
\eqcolon  & \limsup_n I^n_{41} (s, R) + \limsup_n I^n_{42} (s, R) \qtextq{for every} R>0.
\end{align}

By \eqref{main_thm2:eq10}, $\limsup_n I^n_{41} (s, R) =0$. By \eqref{main_thm2:ineq2} and \eqref{main_thm2:ineq4b}, $\limsup_n I^n_{42} (s, R) \preccurlyeq \frac{1}{R}$. Then $\limsup_n | I^n_2 (s) - I_2 (s) |  \preccurlyeq \frac{1}{R}$ and thus $\lim_n I^n_2 (s) = I_2 (s)$. This together with \eqref{main_thm2:eq4a}, \eqref{main_thm2:eq5} and dominated convergence theorem implies
\begin{align}
\int_0^t \int_{\bR^d} f'(s) g(x) \ell(s, x) \diff x \diff s &= \begin{multlined}[t]
- \int_0^t \int_{\bR^d} f(s) \langle \nabla g (x), b (s, x, \ell (s, x), \mu_{s}) \ell (s, x) \rangle \diff x \diff s \\
- \int_0^t \int_{\bR^d} f(s) \Delta g(x) \ell(s, x) \diff x \diff s.
\end{multlined}
\end{align}

Hence $\ell$ satisfies the following Fokker-Planck PDE in distributional sense
\begin{align}
\partial_t \ell (t, x) = - \sum_{i=1}^d \partial_{x_i} \{b^i (t, x, \ell (t, x), \mu_{t}) \ell (t, x)\} + \Delta  \ell (t, x).
\end{align}

Clearly,
\begin{enumerate}
\item $(t, x) \mapsto b(t, x, \ell (t, x), \mu_t)$ is measurable with
\[
\int_{\bT} \int_{\bR^d} |b(t, x, \ell (t, x), \mu_t)| \diff \mu_t (x) \diff t < \infty.
\]
\item $\bT \to \sP_p (\bR^d), \, t \mapsto \mu_t$ is continuous by \eqref{main_thm2:eq2a}.
\end{enumerate}

We apply superposition principle as in \cite[Section 2]{barbu2020nonlinear} and get that \eqref{main_eq1} has a weak solution whose marginal distribution is exactly $(\mu_t, t \in \bT)$. This completes the proof.

\section{Rate of convergence} \label{proof:main_thm3}

This section is dedicated to the proof of \zcref{main_thm3}.

\subsection{Decomposition of error}

For every $(s, z) \in \bT \times \bR^d$, we define
\begin{align}
I^n_1 (s, z) & \coloneq \bE [ \langle b^n (s, X^n_{\tau^n_s}) , \nabla p_{t-s} (X^n_s - z) - \nabla p_{t-s} (X^n_{\tau^n_s} - z) \rangle], \\
I^n_2 (s, z) & \coloneq \bE [ \langle b^n (s, X^n_{\tau^n_s}) , \nabla p_{t-s} (X^n_{\tau^n_s} - z) \rangle] - \bE [\langle b^n (s, X^n_{s}) , \nabla p_{t-s} (X^n_{s} - z) \rangle] , \\
I^n_3 (s, z) & \coloneq 1_{(\eps_n, T]} (s) \bE [ \langle  b (s, X^n_{s}, \ell^n_{\tau^n_s} (X^n_{s}), \mu^n_{\tau^n_s}) - b (s, X^n_s, \ell^n_s (X^n_s), \mu^n_s), \nabla p_{t-s} (X^n_s - z) \rangle ] , \\
I^n_4 (s, z) & \coloneq 1_{[0, \eps_n]} (s) \bE [\langle b (s, X^n_s, \ell^n_s (X^n_s), \mu^n_s) , \nabla p_{t-s} (X^n_{s} - z) \rangle] .
\end{align}

We also define the density-like function $\hat \ell^n_t : \bR^d \to \bR$ by
\[
\hat \ell^n_t (z) \coloneq P_{t} \ell_\nu (z) + \int_0^t \bE [ \langle b (s, X^n_s, \ell^n_s (X^n_s), \mu^n_s), \nabla p_{t-s} (X^n_s- z) \rangle ] \diff s .
\]

By \zcref{duhamel},
\[
\ell^n_t (z) - \hat \ell^n_t (z) = \int_0^t \{ I^n_1 (s, z) + I^n_2 (s, z) + I^n_3 (s, z) - I^n_4 (s, z) \} \diff s.
\]

We define $f, \hat f : \bT \to \bR_+$ by
\begin{align}
f(t) &\coloneq \int_{\bR^d} ( 1+|z|^p) |\ell^n_t (z) - \ell_t (z)| \diff z , \\
\hat f(t) &\coloneq \int_{\bR^d} ( 1+|z|^p) |\hat \ell^n_t (z) - \ell_t (z)| \diff z .
\end{align}

By \eqref{main_thm1:ineq3} and \eqref{main_thm2:eq2a}, $f$ is bounded. Clearly,
\begin{equation} \label{weighted-norm2:ineq1}
f(t) \le \hat f(t) + \int_0^t \int_{\bR^d} ( 1+|z|^p) \{ |I^n_1 (s, z)| + |I^n_2 (s, z)| + |I^n_3 (s, z)|  + |I^n_4 (s, z)| \} \diff z \diff s.
\end{equation}

We have
\begin{align}
|I^n_1 (s, z)|  & \lesssim \bE [ | \nabla p_{t-s} (X^n_s - z) - \nabla p_{t-s} (X^n_{\tau^n_s} - z) | ] \qtext{by \ref{main_assmpt1:1}} \\
& \lesssim \frac{1}{\sqrt{t-s}} \bE [ |X^n_s - X^n_{\tau^n_s}|^\alpha \{ p_{4(t-s)} (X^n_s - z) + p_{4(t-s)} (X^n_{\tau^n_s} - z) \} ] \qtext{by \eqref{gaussian_heat_kernel:ineq4}} \\
& \lesssim \begin{myaligned}[t]
& \frac{1}{\sqrt{t-s}} \int_{(\bR^d)^2} |x-y|^{\alpha}  \{ p_{4(t-s)} (y-z )+ p_{4(t-s)} (x -z ) \} \\
& \times \ell^n_{\tau^n_s} (x) p_{2(s-\tau^n_s)} (x-y) \diff x \diff y.
\end{myaligned} \qtext{by \eqref{sup_bound1:ineq1}}
\end{align}

There exists a constant $\kappa >0$ (depending on $\Theta_1$) such that
\begin{align}
\int_{\bR^d} (1 + |z|^p) \{ p_{4(t-s)} (y-z )+ p_{4(t-s)} (x -z ) \} \diff z &\lesssim 1+ |x|^p + |y|^p , \\
|x-y|^\alpha p_{2(s-\tau^n_s)} (x-y) & \lesssim (s-\tau^n_s)^{\frac{\alpha}{2}} p_{\kappa (s-\tau^n_s)} (x-y) .
\end{align}

Then
\begin{align}
& \int_{\bR^d} (1 + |z|^p) |I^n_1 (s, z)| \diff z \\
\lesssim  & \frac{(s-\tau^n_s)^{\frac{\alpha}{2}}}{\sqrt{t-s}} \int_{(\bR^d)^2} (1+ |x|^p + |y|^p) \ell^n_{\tau^n_s} (x) p_{\kappa (s-\tau^n_s)} (x-y) \diff x \diff y \\
\lesssim  & \frac{(s - \tau^n_s)^{\frac{\alpha}{2}}}{\sqrt{t-s}}  \int_{\bR^d} (1+ |x|^p) \ell^n_{\tau^n_s} (x) \diff x \\
\preccurlyeq  & \frac{(s - \tau^n_s)^{\frac{\alpha}{2}}}{\sqrt{t-s}} \qtext{by \eqref{main_thm1:ineq3}} . \label{weighted-norm2:ineq2}
\end{align}

We have
\begin{align}
| I^n_2 (s, z) | & \le \int_{\bR^d} |\langle b^n (s, x) , \nabla p_{t-s} (x - z) \rangle| \times |\ell^n_{\tau^n_s} (x) - \ell^n_s (x) | \diff x \\
& \lesssim \frac{1_{(\eps_n, T]} (s)}{\sqrt{t-s}} \int_{\bR^d} p_{t-s} (x-z)  |\ell^n_{\tau^n_s} (x) - \ell^n_s (x) | \diff x \qtext{by \eqref{drift}, \eqref{gaussian_heat_kernel:ineq3} and \ref{main_assmpt1:1}} .
\end{align}

Then
\begin{align}
& \int_{\bR^d} (1 + |z|^p) |I^n_2 (s, z)| \diff z \\
\lesssim  & \frac{1_{(\eps_n, T]} (s)}{\sqrt{t-s}} \int_{\bR^d} \bigg [ \int_{\bR^d} (1 + |z|^p) p_{t-s} (x-z) \diff z \bigg ] |\ell^n_{\tau^n_s} (x) - \ell^n_s (x) | \diff x \\
\lesssim  & \frac{1_{(\eps_n, T]} (s)}{\sqrt{t-s}} \int_{\bR^d} (1 + |x|^p) |\ell^n_{\tau^n_s} (x) - \ell^n_s (x) | \diff x \\
\preccurlyeq  & \frac{(s-\tau^n_s)^{\frac{\alpha}{2}} 1_{(\eps_n, T]} (s)}{|\tau^n_s|^{\frac{\alpha}{2}} \sqrt{t-s}} \qtext{by \eqref{main_thm1:ineq4}} \\
\lesssim  & \frac{(s-\tau^n_s)^{\frac{\alpha}{2}} 1_{(\eps_n, T]} (s)}{(s-\eps_n)^{\frac{\alpha}{2}} \sqrt{t-s}}. \label{weighted-norm2:ineq3}
\end{align}

We have
\begin{align}
| I^n_3 (s, z) | & \lesssim \bE [  | \nabla p_{t-s} (X^n_s - z) | ] \{ \| \ell^n_{\tau^n_s} - \ell^n_s \|_\infty + W_p (\mu^n_{\tau^n_s}, \mu^n_s) \} \qtext{by \ref{main_assmpt1:3}} \\
& \lesssim \frac{\| \ell^n_{\tau^n_s} - \ell^n_s \|_\infty + W_p (\mu^n_{\tau^n_s}, \mu^n_s)}{\sqrt{t-s}} \bE [ p_{t-s} (X^n_s - z) ] \qtext{by \eqref{gaussian_heat_kernel:ineq3}} \\
& \preccurlyeq \frac{(s - \tau^n_s)^{\frac{\alpha}{2}}}{\sqrt{t-s}} \bE [ p_{t-s} (X^n_s - z) ] \qtext{by \zcref{main_thm1}} .
\end{align}

Then
\begin{align}
\int_{\bR^d} (1 + |z|^p) | I^n_3 (s, z) | \diff z & \preccurlyeq \frac{(s - \tau^n_s)^{\frac{\alpha}{2}}}{\sqrt{t-s}} \bE \bigg [ \int_{\bR^d} (1 + |z|^p) p_{t-s} (X^n_s-z) \diff z \bigg ] \\
& \lesssim \frac{(s - \tau^n_s)^{\frac{\alpha}{2}}}{\sqrt{t-s}} \bE [1 + |X^n_s|^p ] \\
& \preccurlyeq \frac{(s - \tau^n_s)^{\frac{\alpha}{2}}}{\sqrt{t-s}} \qtext{by \eqref{main_thm1:ineq3}}. \label{weighted-norm2:ineq4}
\end{align}
By \ref{main_assmpt1:1} and \eqref{gaussian_heat_kernel:ineq3},
\begin{align}
| I^n_4 (s, z) | \lesssim  \frac{1_{[0, \eps_n]} (s)}{\sqrt{t-s}} \bE [ p_{t-s} (X^n_{s} - z) ] .
\end{align}

Then
\begin{align}
\int_{\bR^d} (1 + |z|^p) | I^n_4 (s, z) | \diff z & \lesssim \frac{1_{[0, \eps_n]} (s)}{\sqrt{t-s}} \bE \bigg [ \int_{\bR^d} (1 + |z|^p) p_{t-s} (X^n_s-z) \diff z \bigg ] \\
& \lesssim \frac{1_{[0, \eps_n]} (s)}{\sqrt{t-s}} \bE [1 + |X^n_s|^p ] \\
& \preccurlyeq \frac{1_{[0, \eps_n]} (s)}{\sqrt{t-s}} \qtext{by \eqref{main_thm1:ineq3}}. \label{weighted-norm2:ineq5}
\end{align}

\subsection{Bound weighted $L^1$ norm}

As in \cite[Section 5.1]{le2024wellposedness}, we have
\begin{align} 
\hat f(t) & \lesssim (1 + \| \ell_{\nu} \|_{\infty} + M_p (\nu) ) \int_0^t (T-s)^{-\frac{1}{2}} \{ f(s)+|f(s)|^{\frac{1}{p}} \} \diff s \\
& \preccurlyeq \int_0^t (T-s)^{-\frac{1}{2}} \{ f(s)+|f(s)|^{\frac{1}{p}} \} \diff s. \label{weighted-norm2:ineq6}
\end{align}

By \eqref{weighted-norm2:ineq1}, \eqref{weighted-norm2:ineq2}, \eqref{weighted-norm2:ineq3}, \eqref{weighted-norm2:ineq4}, \eqref{weighted-norm2:ineq5} and \eqref{weighted-norm2:ineq6},
\begin{align}
f(t) & \preccurlyeq \begin{myaligned}[t]
& (s - \tau^n_s)^{\frac{\alpha}{2}} \int_0^t \bigg \{ \frac{1}{\sqrt{t-s}} + \frac{1_{[0, \eps_n]} (s)}{(s - \tau^n_s)^{\frac{\alpha}{2}} \sqrt{t-s}} + \frac{1_{(\eps_n, T]} (s)}{(s-\eps_n)^{\frac{\alpha}{2}} \sqrt{t-s}}  \bigg \} \diff s \\
& + \int_0^t (T-s)^{-\frac{1}{2}} \{ f(s)+|f(s)|^{\frac{1}{p}} \} \diff s
\end{myaligned} \\
& \lesssim (s - \tau^n_s)^{\frac{\alpha}{2}} + \int_0^t (T-s)^{-\frac{1}{2}} \{ f(s)+|f(s)|^{\frac{1}{p}} \} \diff s.
\end{align}

Because $p=1$ and $s - \tau^n_s \le \frac{1}{n}$,
\[
f(t) \preccurlyeq n^{-\frac{\alpha}{2}} + \int_0^t (T-s)^{-\frac{1}{2}} f(s) \diff s.
\]

By Gronwall's lemma,
\[
\sup_{t \in \bT} f (t) \preccurlyeq n^{-\frac{\alpha}{2}} .
\]

This completes the proof.

\section*{Acknowledgment}

The author is grateful to Professor Stéphane Villeneuve for his guidance and encouragement, and to Professor Sébastien Gadat for his generous funding during the author's PhD.

\printbibliography

\section*{Appendix} \label{appendix}

\begin{proof} [Proof of \zcref{freezing}]
WLOG, we assume $d=1$ and $\varphi$ is non-negative. The claims that $N \in \cE$ and that $\Phi$ is measurable follows from Tonelli theorem, measurability and non-negativity of $\varphi$.
\begin{enumerate}
\item Let $\nu$ be the joint distribution of $(Y, \operatorname{id}):(\Omega, \cA) \to  (E \times \Omega, \cE \otimes \cG)$. Here $\id$ the identity map on $\Omega$. Let $\bP_\cG$ be the restriction of $\bP$ to $(\Omega, \cG)$. We have $\nu = \mu \otimes \bP_\cG$ because it holds for $A \in \cE$ and $B \in \cG$ that
\begin{align}
\nu (A \times B) &= \bP [(Y, \operatorname{id}) \in A \times B] \\
&= \bP [ \{Y \in A\} \cap \{\operatorname{id} \in B\}] \\
&= \bP [ Y \in A ] \times \bP [ \operatorname{id} \in B ] \qtext{by independence of $\cD, \cG$} \\
&= \mu ( Y \in A ) \times \bP_\cG [ B ].
\end{align}

Then
\begin{align}
\bE [ |\varphi (Y, \cdot)| ] &= \int_{E \times \Omega} |\varphi (y, \omega)| \diff \nu (y, \omega) \qtext{because $\varphi$ is measurable w.r.t $\cE \otimes \cG$}\\
&= \int_{E} \bigg ( \int_{\Omega} |\varphi (y, \omega)| \diff \bP_\cG (\omega) \bigg ) \diff \mu (y) \qtext{by Tonelli theorem} \\
&= \int_{E} \bE [|\varphi(y, \cdot)|] \diff \mu (y). \label{freezing:prf:eq1}
\end{align}

By integrability of $\varphi (Y, \cdot)$, it holds for $\mu$-a.e. $y \in E$ that $\bE [|\varphi(y, \cdot)|] < \infty$. Thus $\mu(N)=0$.

\item It remains to prove $\bE [\varphi(Y, \cdot) | \cD] = \Phi (Y)$. First, we verify that $\Phi (Y)$ is integrable. Indeed,
\begin{align}
\bE [|\Phi (Y)|] &= \int_{E} |\Phi(y)| \diff \mu (y) \\
&= \int_{E} |\bE [\varphi(y, \cdot)]| \diff \mu (y) \\
&\le \int_{E} \bE [|\varphi(y, \cdot)|] \diff \mu (y) \\
&= \bE [ |\varphi (Y, \cdot)| ] \qtext{by \eqref{freezing:prf:eq1}}\\
&< \infty. 
\end{align}

Let $D \in \cD$. We need to prove $\bE [\varphi(Y, \cdot) 1_D] = \bE[\Phi (Y)1_D]$. WLOG, we assume $\bP [D] >0$. We define a probability measure $\widetilde \bP$ on $(\Omega, \cA)$ by $\widetilde \bP [A] \coloneq \frac{\bP [A \cap D]}{\bP [D]}$ for $A \in \cA$. Let $\widetilde \bE$ be the expectation w.r.t $(\Omega, \cA, \widetilde \bP)$. Approximating with simple functions, we get $\bE [Z1_D] = \bP [D] \times \widetilde \bE [Z]$ for $Z \in L^1(\Omega, \cA, \widetilde \bP)$. So it suffices to prove $\widetilde \bE [\varphi(Y, \cdot)] = \widetilde \bE[\Phi (Y)]$. Let $\widetilde \mu$ be the distribution of $Y$ on $(E, \cE)$ under $\widetilde \bP$. Let $\widetilde \nu$ be the joint distribution of $(Y, \operatorname{id}):(\Omega, \cA) \to  (E \times \Omega, \cE \otimes \cG)$ under $\widetilde \bP$. Let $\widetilde \bP_\cG$ be the restriction of $\widetilde \bP$ to $(\Omega, \cG)$. Let's prove that $\widetilde \nu = \widetilde \mu \otimes \widetilde \bP_\cG$. Indeed, it holds for $A \in \cE$ and $B \in \cG$ that
\begin{align}
\widetilde \nu (A \times B) &= \widetilde \bP [(Y, \operatorname{id}) \in A \times B] \\
&= \widetilde \bP [ \{Y \in A\} \cap \{\operatorname{id} \in B\}] \\
&= \frac{\bP [ (\{Y \in A\} \cap D) \cap \{\operatorname{id} \in B\}]}{\bP [D]} \\
&= \frac{\bP [ \{Y \in A\} \cap D ] \times \bP[ \{\operatorname{id} \in B\} \cap D]}{(\bP [D])^2} \qtext{by independence of $\cD, \cG$ under $\bP$} \\
&= \widetilde \bP [Y \in A] \times \widetilde \bP [\operatorname{id} \in B] \\
&= \widetilde \mu ( Y \in A ) \times \widetilde \bP_\cG [ B ].
\end{align}

Then
\begin{align}
\widetilde \bE [ \varphi (Y, \cdot) ] &= \int_{E \times \Omega} \varphi (y, \omega) \diff \widetilde\nu (y, \omega) \qtext{because $\varphi$ is measurable w.r.t $\cE \otimes \cG$}\\
&= \int_{E} \bigg ( \int_{\Omega} \varphi (y, \omega) \diff \widetilde \bP_\cG (\omega) \bigg ) \diff \widetilde \mu (y) \qtext{by Fubini's theorem} \\
&= \int_{E} \widetilde \bE [\varphi(y, \cdot)] \diff \widetilde \mu (y), \\
\widetilde \bE [\varphi(y, \cdot)] &= \frac{\bE [\varphi(y, \cdot)1_D]}{\bP [D]} \\
&= \frac{\bE [\varphi(y, \cdot)] \times \bE [1_D]}{\bP [D]} \qtext{by independence of $\cD, \cG$ under $\bP$} \\
&= \bE [\varphi(y, \cdot)] = \Phi (y).
\end{align}

Then $\widetilde \bE [ \varphi (Y, \cdot) ] = \int_{E} \Phi (y) \diff \widetilde \mu (y) = \widetilde \bE [\Phi (Y)]$. This completes the proof.
\end{enumerate}
\end{proof}

\end{document}